\numberwithin{equation}{section}
\definecolor{qqwuqq}{rgb}{0,0,0}
\begin{document}

\date{\today}

\title[Curvature estimates for graphs in warped products]{Curvature estimates 
for graphs in warped product spaces}

\author[A. P. Barreto]{Alexandre Paiva Barreto} 
\address{Department of Mathematics, 
	Universidade Federal de S\~ao Carlos (UFSCar),
	Brazil}
\email{alexandre@dm.ufscar.br}

\author[F. A. Coswosck]{Fabiani A. Coswosck}
\address{Universidade Federal do Esp\'irito Santo (UFES), Brazil.}
\email{fcoswosck@gmail.com}

\author[L. Hartmann]{Luiz Hartmann}
\address{Department of Mathematics, 
	Universidade Federal de S\~ao Carlos (UFSCar),
	Brazil}
\email{hartmann@dm.ufscar.br}
\urladdr{http://www.dm.ufscar.br/profs/hartmann}

\thanks{A. P. Barreto  is partially supported by FAPESP: 2018/03721-4, 
Fabiani A. Coswosck is partially supported by CAPES: 88881.068165/2014-01 and 
Luiz Hartmann is partially supported by FAPESP:2018/23202-1 .}

\subjclass[2010]{Primary 53C42 ; Secondary 53A10}
\keywords{warped products, graphs over Riemannian domains, mean curvature, 
scalar curvature, norm of the second fundamental form}

\begin{abstract} 
We prove local and global upper estimates for the infimum of the mean 
curvature, the scalar curvature and the norm of the shape operator of
graphs 
in a warped product space. Using these estimates, we 
obtain some results on pseudo-hyperbolic spaces 
and space forms.
\end{abstract}

\maketitle

\tableofcontents

%%%%%%%%%%%%%%%%%%%%%%%%%%%%%%%%%
\section{Introduction}
%%%%%%%%%%%%%%%%%%%%%%%%%%%%%%%%%

%%%%%%%%%%%%%%%%%%%%%%%%%%%%%%%%%%%%%%%%%%%%%%%%%%%%%%%%%%

The study of submanifolds in a Riemannian manifold is a central subject on 
Differential Geometry. In many results, global properties of a submanifold 
are 
drawn from hypothesis on its 
topology and curvature. Just to exemplify, Aleksandrov’s theorem \cite{ALX} 
states that the round spheres are the only compact embedded hypersurfaces 
with constant mean curvature in Euclidean space. For a sample of results of 
the same nature see for example 
\cite{KLO,NM,ROS,ROS2,Che,MAN,ARO,FAL,BFH} and references therein.

\medskip

In recent years, many results of the nature described in the previous 
paragraph have been obtained in the case of hypersurfaces of a warped product 
space $M_{\psi} \times R$. For example, Montiel \cite{Mon} obtained 
conditions for a compact hypersurface with constant mean curvature in such a 
space to be a slice (see \cite{AD} for generalizations of this result). 

%Under weaker conditions 
%\cite{AD} also show some upper and bottom estimates for the infimum 
%and the supremum of mean curvature of the hypersurface. 

\medskip

An important class of hypersurfaces in warped product spaces is that of 
the en\-ti\-re 
graphs. A breakthrough result concerning this class is the Bernstein Theorem, 
which states that the planes are the only minimal entire graphs in the 
Euclidean 
3-space. Many Bernstein-type theorems in warped 
product spaces have appeared in the late years ({\it e.g.} 
\cite{AD2,AD,ADRip,CdL,CCdL,ADR}). Using the 
Alexandrov's 
reflection method, Frensel \cite{KF} proved that 
the only graphs with constant mean curvature in the half-plane model of the 
hyperbolic space are the horospheres (see also \cite[Theorem A]{DCL}). Aquino 
and de 
Lima 
\cite{AL} obtained a Bernstein-type theorem on a particular warped product 
space with 
additional assumptions on the second fundamental form of the graph. 

\medskip

Unfortunately, determining the curvature of hypersurfaces in arbitrary spaces 
is a task that in general is either difficult or requires elaborated 
calculations.  For this reason, obtaining curvature estimates of 
hypersurfaces is all that can be done in many situations.
%
%
%obtain precise values for 
%curvature elements in general spaces can be complicated or demanded hard 
%calculus to do it. Therefore, estimate the value of these elements is 
%central in classical geometry. 
%
%
%
In 1955, 
Heinz \cite{HE} obtained estimates for the 
mean curvature $H$ and Gaussian curvature $K$ of a surface in $\mathbb{R}^3$ 
which is the graph of a smooth function defined on a open disc of radius $r$ 
in the plane. He showed that  
\begin{equation*}
\inf|H|\leq \frac{1}{r} \qquad \text{and} \qquad \inf|K|\leq 
\frac{3e^2}{r^2}. 
\end{equation*}
Later on, Chern \cite{CH} and Flanders \cite{FL}, independently, extended the 
above inequality of the mean curvature to higher dimensions. After that, it 
was generalized by 
Finn \cite{FI} to a broader class of domains of the plane and by Salavessa 
\cite{SAL1,SAL2} for graphs over Riemannian manifolds. 
Inspired by these works, 
Fontenele and the second author
\cite{FONT,FF} established estimates of curvature for graphs 
in the Riemannian product $M\times\mathbb{R}$, from which they deduced 
sharp estimates for the infimum of the mean curvature for graphs over 
a complete Riemannian manifold with Ricci curvature bounded below. 

\medskip

In this work we establish several results of the same nature for graphs in a 
warped product space contained or not in a slab. For 
example, 
Corollary \ref{Corol-MeanCurvature-Slice} 
provides a version of \cite[Theorem  
2.9]{AD} for graphs in the warped product 
$M_{\psi}\times\mathbb{R}$,  
in which instead of making assumption on the Ricci curvature of the graph we 
make assumption on the sectional curvature of the base $M$.
Theorem 
\ref{Theo-MeanCurvOmoriYau-sigma} provides an estimate for the mean curvature 
of a graph in a warped product space, similar in spirit to  
\cite[Proposition  
2.10]{AD}. 
In that estimate we do not assume that the graph is contained in a slab, 
which allows us to apply it for any graph in the 
space  $M_{\cosh 
t}\times\mathbb{R}$, provide that the sectional curvature of $M$ is bounded 
from below.

\medskip

%As consequence of the estimates obtained in Sections 
%\ref{Sec-Est-Curv-Cont-Slab} and \ref{Sec-Est-Curv--Not-Cont-Slab}, we 
%obtain results about graphs in the hyperbolic space 
%and, more generally, graphs in pseudo-hyperbolic spaces. 
Our approach is based in a known relation between the principal curvatures of 
a graph in a warped product space $M_\psi \times I$ and the principal 
curvatures of a related graph in the Riemannian product  $M\times J$.
%For
%
%an extension to the warped product spaces of results due 
%Guan and Spruck \cite[Section 2]{GS}.  
%(compare with Lemma 
%\ref{Lem-MeanCurvComp}). 
It is worth to point out that the curvature of the slices have 
strong influence in the estimates presented here.

\medskip

This paper is organized as follows. In Section \ref{Sec-Preliminaries}, we 
fix 
the notation and present basic results that will be used in the entire work. 
In Section \ref{Sec-Est-Curv-Cont-Slab}, we present curvature estimates for 
graphs contained in a slab, and in Section \ref{Sec-Est-Curv--Not-Cont-Slab} 
for any graph. In Section 
\ref{Sec-Applications}, we use the estimates of 
the previous sections to obtain results on pseudo-hyperbolic spaces and space 
forms. For example, we show that the 
only entire 
graphs with constant mean curvature contained in a slab of 
$\mathbb{H}^{m+1}\equiv 
\mathbb{H}^m_{\cosh t} \times \R$ or $(\S^m)_{\sinh t}\times 
(0,\infty) \equiv \mathbb{H}^{m+1}$ are the slices (see Corollaries 
\ref{cor4} and \ref{Cor-Sphere-Hype}).

\section{Preliminary and auxiliary concepts}\label{Sec-Preliminaries}
%%%%%%%%%%%%%%%%%%%%%%%%%%%%%%%%%

In this section we will fix the notation and present general 
results about warped product spaces and graphs. Our results can be applied 
for 
$C^2$-functions, however, for simplicity, we assume that all manifolds, 
functions, etc., are smooth.

Let $\left(M^m, \langle\cdot,\cdot\rangle_M\right)$ and 
$\left( N^n, 
\langle\cdot,\cdot\rangle_N\right)$ be Riemannian manifolds 
and let $\pi_M:M\times N \rightarrow M$, $\pi_N: M\times N 
\rightarrow N$ be the projections maps over $M$ and $N$, respectively. 
Let $\psi:N\rightarrow (0,\infty)$ be a positive function.
The product $M\times N$ equipped with the Riemannian metric 
\begin{equation}\label{wp1}
\langle \cdot, \cdot \rangle_{\psi} := \left( 
\psi\circ\pi_N\right)^2\pi_M^{\ast}\langle \cdot, \cdot \rangle_M + 
\pi_N^{\ast}  \langle \cdot, \cdot \rangle_N, 
\end{equation}
is called \textit{warped product space} and denoted by $M_{\psi}\times N$.   
%Consequently, the Ricci 
%curvature and the Scalar curvature is given, respectively, by
%\begin{equation*}
%\Ric_{M\times\{t\}}(e_j) = \frac{\Ric_M(e_j)}{\psi^2(t)} \qquad 
%\text{and} \qquad R_{M\times \{t\}} = \frac{R_M}{\psi^2(t)}.
%\end{equation*}
%\LHchange{Achar um lugar para colocar esse parágrafo. Até aqui essa função 
%não foi usada.}In the remaining 
%of the work, $\mu_s(t)$ stands 
%for the function defined by 
%\begin{equation}\label{eq07}
%\mu_s(t) =\left\{
%\begin{array}{ll}
%\sqrt{s} \cot \left( t\; \sqrt{s} \right), & s>0, \; 
%0<t<\frac{\pi}{\sqrt{s}},\\
%t^{-1}, &s = 0, \; t>0,\\
%\sqrt{-s} \coth\left( t \; \sqrt{-s}\right),  & s<0,\; t>0 .
%\end{array}
%\right.
%\end{equation}
The relationship between the curvature tensors $\Re_\psi$, ${\Re}^{N}$ and 
${\Re}^{M}$, of $M_{\psi}\times N$, $N$ and the 
fiber $M$, respectively, are 
given by following proposition (see \cite[p. 210]{ON}). 

\begin{prop}\label{ecpprop2}
	 If $U,V,W \in \mathfrak{X}(M)$ and $X,Y,Z\in \mathfrak{X}(N)$ are 
	 vector 
	fields, then  
	\begin{enumerate}
		\item [(1)] $\Re_\psi(\widetilde{X}, \widetilde{Y})\widetilde{Z} = 
		\widetilde{{\Re}^{N}\left( X,Y\right)Z}$. 
		\item [(2)] $\Re_\psi(\widetilde{V}, \widetilde{X} )\widetilde{Y} = 
		-\displaystyle\frac{\textnormal{Hess 
		}\psi\left(X,Y\right)}{\psi}\hspace{0.1cm}\widetilde{V}$.
		\item [(3)] $\Re_\psi( \widetilde{X}, \widetilde{Y})\widetilde{V} = 
		\Re(\widetilde{V}, \widetilde{W} )\widetilde{X} = 0$. 
		\item [(4)] $\Re_\psi(\widetilde{X}, \widetilde{V})\widetilde{W} = 
		-\frac{\left\langle \widetilde{V}, 
		\widetilde{W}\right\rangle_\psi}{\psi}\widetilde{\nabla_X \nabla 
		\psi}$.
		\item [(5)] $\Re_\psi(\widetilde{V}, \widetilde{W} )\widetilde{U} = 
		\widetilde{{\Re}^M( V,W)U} - \displaystyle\frac{\langle\nabla 
		\psi, \nabla \psi \rangle_N}{\psi^2}\left\{\left\langle 
		\widetilde{W}, \widetilde{U}\right\rangle_\psi \widetilde{V} - 
		\left\langle\widetilde{V}, \widetilde{U} 
		\right\rangle_{\psi}\widetilde{W} 
		\right\}$,
	\end{enumerate}
	where $\widetilde{\cdot}$ represents the lift of the field to 
	$M_{\psi}\times N$. 
\end{prop}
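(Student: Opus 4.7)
The plan is to derive, via the Koszul formula, the Levi-Civita connection $\widetilde{\nabla}$ of $M_\psi\times N$ in terms of $\nabla^{N}$, $\nabla^{M}$ and the gradient $\nabla\psi$ (taken with respect to $\langle\cdot,\cdot\rangle_{N}$), and then apply the definition of the curvature tensor case by case. For $X,Y\in\mathfrak{X}(N)$ and $V,W\in\mathfrak{X}(M)$ one should find
\begin{align*}
\widetilde{\nabla}_{\widetilde{X}}\widetilde{Y} &= \widetilde{\nabla^{N}_{X}Y}, \qquad \widetilde{\nabla}_{\widetilde{X}}\widetilde{V}=\widetilde{\nabla}_{\widetilde{V}}\widetilde{X}=\frac{X(\psi)}{\psi}\,\widetilde{V}, \\
\widetilde{\nabla}_{\widetilde{V}}\widetilde{W} &= \widetilde{\nabla^{M}_{V}W}-\frac{\langle \widetilde{V},\widetilde{W}\rangle_{\psi}}{\psi}\,\widetilde{\nabla\psi}.
\end{align*}
Two observations make the Koszul bookkeeping painless: lifts from different factors commute, so $[\widetilde{X},\widetilde{V}]=0$, and a function pulled back from one factor is annihilated by lifts of fields from the other (so $\widetilde{V}(\psi)=0$, in particular).

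With the connection in hand, each identity reduces to a direct computation of $\Re_{\psi}(A,B)C=\widetilde{\nabla}_{A}\widetilde{\nabla}_{B}C-\widetilde{\nabla}_{B}\widetilde{\nabla}_{A}C-\widetilde{\nabla}_{[A,B]}C$. Identity (1) follows by iterating the base--base rule. For (2) one expands two mixed derivatives and recognizes $X(Y\psi)-(\nabla^{N}_{X}Y)(\psi)$ as $\textnormal{Hess}\,\psi(X,Y)$. Identity (3) splits in two: for $\Re_{\psi}(\widetilde{X},\widetilde{Y})\widetilde{V}$ the cross terms $X(\psi)Y(\psi)/\psi^{2}$ cancel and the $[X,Y](\psi)/\psi$-contribution absorbs what remains; for $\Re_{\psi}(\widetilde{V},\widetilde{W})\widetilde{X}$ one uses that $X(\psi)/\psi$ is pulled back from $N$ and so is killed by $\widetilde{V}$ and $\widetilde{W}$. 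For (4) one applies the mixed rule twice, using $\widetilde{\nabla}_{\widetilde{X}}\widetilde{\nabla\psi}$ to extract the lift of $\nabla_{X}\nabla\psi$.

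The main obstacle is identity (5). Expanding $\widetilde{\nabla}_{\widetilde{V}}\widetilde{\nabla}_{\widetilde{W}}\widetilde{U}$ via the fiber--fiber formula produces five pieces: the iterated fiber derivative $\widetilde{\nabla^{M}_{V}\nabla^{M}_{W}U}$; three contractions of shape $\psi\,\langle\cdot,\cdot\rangle_{M}\,\widetilde{\nabla\psi}$ arising both from the correction term itself and from differentiating $\langle W,U\rangle_{M}$; and a term $-\langle\nabla\psi,\nabla\psi\rangle_{N}\,\langle W,U\rangle_{M}\,\widetilde{V}$ coming from $\widetilde{\nabla}_{\widetilde{V}}\widetilde{\nabla\psi}=\langle\nabla\psi,\nabla\psi\rangle_{N}\,\widetilde{V}/\psi$. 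After antisymmetrizing in $V,W$ two pairs of mixed contractions cancel outright; the surviving mixed pair becomes $-\psi\,\langle[V,W],U\rangle_{M}\,\widetilde{\nabla\psi}$, which is exactly cancelled by the corresponding term in $\widetilde{\nabla}_{\widetilde{[V,W]}}\widetilde{U}$. The pure fiber part reassembles into $\widetilde{\Re^{M}(V,W)U}$, and the two surviving $\langle\nabla\psi,\nabla\psi\rangle_{N}$-terms combine, after converting $\langle\cdot,\cdot\rangle_{M}$ back to $\langle\cdot,\cdot\rangle_{\psi}$ via the factor $\psi^{2}$, into the stated expression. The real difficulty is keeping the many correction terms and their signs straight, and staying attentive to whether $\langle\cdot,\cdot\rangle$ refers to the $M$-metric, the $N$-metric, or $\langle\cdot,\cdot\rangle_{\psi}$.
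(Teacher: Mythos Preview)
Your proof is correct and follows the standard route (Koszul formula for the warped connection, then case-by-case curvature computation), which is essentially how O'Neill derives these identities. Note, however, that the paper does not prove this proposition at all: it simply cites \cite[p.~210]{ON} and adds a remark about the sign convention for $\Re$, so there is no ``paper's own proof'' to compare against---you have supplied what the authors chose to omit.
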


\begin{remark}
Differently from \cite{ON}, we are using the following definition for the 
curvature tensor of a Riemannian 
manifold 
\begin{equation*}
\Re(X,Y)Z := \nabla_X \nabla_Y Z - \nabla_Y \nabla_X Z - \nabla_{[x,y]} Z. 
\end{equation*}
This choice justify the sign difference in some 
of the previous relations.
\end{remark}

In the 
special case where $N$ is an open interval $I \subset \R$,
the {\it slices} $M\times\{t\}$  
are totally umbilical hypersurfaces with constant mean 
curvature (see \cite[p.206]{ON})
\begin{equation}\label{Eq-MeanCurvSlice}
	\mathcal{H}(t) := (\ln \psi)'(t)=\frac{\psi'}{\psi}(t),
\end{equation}
with respect to the vector field $-\partial_t$, where $\partial_t$ is  
the canonical unit normal vector field. 
By Proposition \ref{ecpprop2} item (5), we 
obtain that 
\begin{equation}\label{Eq-TangSecCurv}
	K_{M_\psi \times I}(p,t)(\tilde{u},\tilde{v}) = 
	\frac{K_M(p)(u,v)}{\psi^2(t)}-\H^2(t),
\end{equation}
where $u,v \in T_pM$ and $K_M$ is the sectional curvature of $M$. By 
\Eqref{Eq-TangSecCurv} and the Gauss 
equation, the sectional 
curvature of a 
slice at a point $(p,t)$, in the planed spanned by $\tilde{u}$, $\tilde{v}$, 
is given by
\begin{equation*}
%\cK(e_i,e_j)(t)
\cK(p,t)(\tilde{u},\tilde{v}):= 
\frac{K_M(p)(u,v)}{\psi^2(t)}.
\end{equation*}
Again by Proposition \ref{ecpprop2}, we have that the 
sectional curvature $K_{M_\psi 
\times 
I}(\tilde{u},\partial_t)$ does not depends on $u\in T_p M$ and $p\in M$. 
%where $K_{M_\psi \times I}$ is the sectional curvature of the 
%warped product at $(p,t)$. 
Therefore
we define the {\it normal sectional 
curvature} of $M\times \{t\}$ by 
\begin{equation}\label{Eq-NormalSecCurv}
\cK^{\perp}(t):=K_{M_\psi \times I}(\tilde{u},\partial_t) = 
-\frac{\psi''}{\psi}(t).
\end{equation}
It follows from Proposition \ref{ecpprop2}, \Eqref{Eq-TangSecCurv}  and 
\eqref{Eq-NormalSecCurv} that, 
$M_{\psi}\times I$ has 
constant sectional curvature $\kappa$ if, and only if, $M$ has constant 
sectional curvature $K_M$, the normal sectional curvature is constant equal 
to $\kappa$ and 
%$\left( \psi'\right)^2 + \kappa\cdot \psi^2 = K_M$.  
$\H^2 + \kappa = \cK$. 

The main object studied here is the graph of a function $f:M\rightarrow 
I\subset \mathbb{R}$. The {\it graph} of 
$f$ is 
defined by  
\begin{equation*}
	\Gamma_f := \{\left( x, f(x)\right): x\in M\}. 
	%\subset M_{\psi}\times \R.
\end{equation*} 
The {\it canonical parametrization} of the graph of $f$ will be denoted by 
$\phi:M\rightarrow \Gamma_f\subset M_{\psi} \times I$, where $\phi(x) 
:= \left( 
x,f(x)\right)$. It is easy to see that  
\begin{equation}\label{wp3}
d\phi_x(v) = \widetilde{v} + \langle \nabla f(x),v\rangle_M \cdot
\partial_t, \hspace{0.4cm} \forall x\in M, \hspace{0.1cm} v\in T_x M.
\end{equation}
We choose the following {\it unit normal vector field to the graph} of $f$ 
in 
$M_{\psi}\times I$ 
\begin{equation}\label{wp4}
\eta\left( \phi(x)\right) = \frac{\psi\left( f(x)\right)}{\cW(x)}\left( 
\frac{\widetilde{\nabla 
f(x)}}{\psi^2(f(x))} - \partial_t \left( \phi(x)\right)\right),
\end{equation}
where $\cW(x):= \sqrt{\|\nabla f(x)\|^2+(\psi\circ f)^2(x) }$ and 
$\nabla f$ is the gradient vector field in $M$, \cf \cite[Eq.(3.3)]{AGH}.

The shape operator of a hypersurface in $M_\psi\times I$ was described in 
\cite[Eq.(2.14)]{AGH}. For the convenience of the reader, we state below this
formula according our notations. 
For every tangent vector  $v,w\in T_x M$ and $x\in M$, {\it the shape 
operator of the 
graph} of 
$f$ in $M_{\psi}\times I$, with respect to $\eta$, satisfies 
\begin{equation}\label{wpprop01}
\begin{aligned}
\left\langle A\left( d\phi_x(v)\right), d\phi_x(w)\right\rangle_{\psi} &=& 
-\frac{\psi\left( f(x)\right)}{\cW(x)} \textnormal{Hess}f_x(v,w) 
+ 
\frac{(\psi^3 \cdot\H)(f(x))}{\cW(x)}\langle 
v,w\rangle_M\\
& & + 2\frac{(\psi\cdot  \H)\left( f(x)\right)}{\cW(x)} 
\langle\nabla 
f(x),v\rangle_M \cdot \langle\nabla f(x), w\rangle_M.
\end{aligned}
\end{equation}

The following proposition is consequence from the last equation.

\begin{prop}\label{wpprop1}
	The mean curvature of $\Gamma_f$ in 
	$M_{\psi}\times I$ with respect to 
		the unit normal vector field $\eta$ is given by  
		\begin{equation*}
		\begin{aligned}
		mH\left( \phi(x)\right) &= -\textnormal{Div}_M \left(\frac{\nabla 
		f}{\psi(f)\cdot\cW}\right)(x)
		+\frac{(\psi\cdot \H)(f(x))}{\cW (x )}\left(m - \frac{|\nabla 
			f(x)|^2}{\psi^2(f(x))}\right), 
		\end{aligned}
		\end{equation*}
where $\textnormal{Div}_M$ is the divergence taken with respect to the metric 
on $M$. 
\end{prop}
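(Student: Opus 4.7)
The plan is to compute the trace of the shape operator directly from formula \Eqref{wpprop01}. First I fix $x\in M$ and choose an orthonormal basis $\{e_1,\ldots,e_m\}$ of $(T_xM,\langle\cdot,\cdot\rangle_M)$. Writing $f_i:=\langle\nabla f(x),e_i\rangle_M$, formula \Eqref{wp3} shows that the Gram matrix of the induced basis $\{d\phi_x(e_i)\}$ of $T_{\phi(x)}\Gamma_f$ is the rank-one perturbation $g_{ij}=\psi^2(f)\,\delta_{ij}+f_i f_j$, whose inverse (by Sherman--Morrison) is
\[
g^{ij}=\frac{1}{\psi^2(f)}\,\delta^{ij}-\frac{f^i f^j}{\psi^2(f)\,\cW^2}.
\]

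Next I would evaluate $mH=g^{ij}\,\langle A(d\phi_x(e_i)),d\phi_x(e_j)\rangle_\psi$ by substituting \Eqref{wpprop01}. The rank-one structure of $g^{ij}$ makes the three required contractions $g^{ij}\,\textnormal{Hess}\,f(e_i,e_j)$, $g^{ij}\delta_{ij}$ and $g^{ij}f_i f_j$ immediate, and collecting terms gives
\[
mH=-\frac{\Delta_M f}{\psi(f)\,\cW}+\frac{\textnormal{Hess}\,f(\nabla f,\nabla f)}{\psi(f)\,\cW^3}+\frac{(\psi\cdot\H)(f)}{\cW}\!\left(m+\frac{|\nabla f|^2}{\cW^2}\right).
\]

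Finally I would recognise the first two summands, together with the ``extra'' $\psi\H|\nabla f|^2/\cW^3$ piece of the third, as $-\textnormal{Div}_M\bigl(\nabla f/(\psi(f)\cW)\bigr)$. This is verified by expanding the divergence through the product rule and using $\nabla\psi(f)=\psi'(f)\nabla f$, $\cW\nabla\cW=\textnormal{Hess}\,f(\nabla f,\cdot)^{\sharp}+\psi(f)\psi'(f)\nabla f$, and $\H=\psi'/\psi$. The main obstacle is the bookkeeping in this last step: one writes $m+|\nabla f|^2/\cW^2$ as $(m-|\nabla f|^2/\psi^2(f))$ plus the correction $|\nabla f|^2/\psi^2(f)+|\nabla f|^2/\cW^2$, and then uses the identity $\cW^2=|\nabla f|^2+\psi^2(f)$ to see that this correction, multiplied by $(\psi\cdot\H)(f)/\cW$, combines with the Laplacian and Hessian terms above to yield exactly $-\textnormal{Div}_M(\nabla f/(\psi(f)\cW))$. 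Once this reshuffle is performed, the stated formula follows.
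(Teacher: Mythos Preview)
Your proposal is correct and is exactly the computation the paper has in mind: the paper states only that the proposition ``is consequence from the last equation'' (i.e.\ from \eqref{wpprop01}), and you carry out precisely that trace computation, with the Sherman--Morrison inverse of the induced metric and the divergence identification both checking out.
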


%	\begin{equation*}
%	\begin{aligned}
%	mH\left( \phi(x)\right) &= -\frac{\Delta f(x)}{\psi(f(x))\cdot\cW(x)} + 
%	\frac{\textnormal{Hess}f_x\left( \nabla f(x), 
%		\nabla 
%		f(x)\right)}{\psi(f(x))\cdot\cW^3(x)} 	\\
%	&+\frac{(\psi\cdot \H)(f(x))}{\cW (x )}\left(\frac{|\nabla 
%	f(x)|^2}{\cW^2(x)} + m\right).
%	\end{aligned}
%	\end{equation*}
%\end{prop}

%%%%%%%%%%%%%%%%%%%%%%%%%%%%%%%%%
\section{Curvature estimates for graphs contained in a 
slab}\label{Sec-Est-Curv-Cont-Slab}
%%%%%%%%%%%%%%%%%%%%%%%%%%%%%%%%%

In this section, we establish curvature estimates for graphs contained in a 
slab of $M_\psi \times I$. We say that a graph $\Gamma_f$ of a function 
$f:M\to I$ is {\it 
	contained in a slab}, 
if, and only if, the function $f$ is bounded on $I$, \ie there 
exist $a,b\in I$ such that $f(x)\in [a,b]$ for all $x\in M$.

% We will see that 
%the geometry of the slices have strong influence in these estimates. 

Our main ingredient to obtain these estimates is the 
following technical proposition. 

\begin{prop}\label{Prop-PrinCurvRefin}
	Let $M^m$ be a complete Riemannian manifold with sectional 
	curvature boun\-ded below and $\Gamma_f \subset M_\psi \times I$ a graph. 
	\begin{enumerate}
		\item If $f$ is bounded from below, then there is a 
	sequence $(x_n)\subset M$ such that the 
	principal curvatures 
	$\lambda_i$ of the graph of $f$ satisfies 
	\begin{equation}\label{Eq-PrinCurvEstpositive}
	\lambda_i(\phi(x_n)) < \frac{1}{n\psi(f(x_n))} +|\mathcal{H}(f(x_n))|, 
	\;\; \forall i=1,...,m,    
	\end{equation}
	for $n$ sufficiently large.
	Moreover, $f(x_n)\to \inf f$.
	
	\item If $f$ is bounded from above,   then 
	there 
	is a sequence $(y_n)\subset M$ such that  
	the 
	principal curvatures 
	$\lambda_i$ of the graph of $f$ satisfies 
	\begin{equation}\label{Eq-Princ.CurvEstnegative}
	-\lambda_i(\phi(y_n)) < \frac{1}{n\psi(f(y_n))} +|\mathcal{H}(f(y_n))|, 
	\;\; \forall i=1,...,m,   
	\end{equation}
	for $n$ sufficiently large. Moreover, $f(y_n)\to \sup f$.
\end{enumerate}
\end{prop}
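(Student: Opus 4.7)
The plan is to invoke the Omori maximum principle for $-f$ (respectively $f$) and then substitute the resulting control on $\nabla f$ and $\mathrm{Hess}\,f$ directly into the shape operator formula \eqref{wpprop01}. Since $M$ is complete with sectional curvature bounded below, Omori's classical theorem applies: in case (1), applied to $-f$ (which is bounded above since $f$ is bounded below), it produces a sequence $(x_n)\subset M$ with $f(x_n)\to\inf f$, $|\nabla f(x_n)|_M < \alpha_n$, and $\mathrm{Hess}\,f_{x_n}(v,v) \ge -\beta_n|v|_M^2$ for every $v\in T_{x_n}M$, where the sequences $\alpha_n,\beta_n\downarrow 0$ may be prescribed as small as one pleases as functions of $n$.

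Next, fix a principal direction of $\Gamma_f$ at $\phi(x_n)$; by \eqref{wp3} it has the form $e_i=d\phi_{x_n}(v_i)$ for some $v_i\in T_{x_n}M$, and normalizing $\langle e_i,e_i\rangle_\psi=1$ gives the identity $\psi^2|v_i|_M^2+\langle \nabla f,v_i\rangle_M^2 = 1$. Setting $v=w=v_i$ in \eqref{wpprop01}, the two $\mathcal{H}$-terms combine, via this identity, into the clean expression
\begin{equation*}
\lambda_i(\phi(x_n)) \;=\; -\frac{\psi}{\cW}\,\mathrm{Hess}\,f_{x_n}(v_i,v_i) \;+\; \frac{\psi\,\mathcal{H}}{\cW}\bigl(1+\langle\nabla f(x_n),v_i\rangle_M^2\bigr),
\end{equation*}
with $\psi,\mathcal{H},\cW$ evaluated at $x_n$ or $f(x_n)$. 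The Omori Hessian bound combined with $|v_i|_M\le 1/\psi$ and $\cW\ge \psi$ forces the first summand to be at most $\beta_n/(\cW\psi)$, while Cauchy--Schwarz gives $\langle\nabla f,v_i\rangle_M^2 \le \alpha_n^2/\psi^2$, which together with $\cW\to \psi$ bounds the second summand by $|\mathcal{H}|(1+o(1))$. Since $f(x_n)\to\inf f$, the values $\psi(f(x_n))$ and $\mathcal{H}(f(x_n))$ stay within a compact range, so choosing $\alpha_n,\beta_n$ sufficiently small in terms of $n$ absorbs all residual errors into $1/(n\psi(f(x_n)))$, producing \eqref{Eq-PrinCurvEstpositive}.

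Part~(2) is symmetric: one applies Omori directly to $f$ (bounded above) to produce $y_n$ with $f(y_n)\to\sup f$, $|\nabla f(y_n)|_M$ small, and $\mathrm{Hess}\,f_{y_n}(v,v)\le \beta_n|v|_M^2$. The sign of the Hessian estimate is reversed, which reverses the sign of the first summand in the formula above; absorbing errors as before yields the symmetric bound $-\lambda_i(\phi(y_n)) < 1/(n\psi(f(y_n)))+|\mathcal{H}(f(y_n))|$ demanded by \eqref{Eq-Princ.CurvEstnegative}.

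The delicate point is the rescaling between the metric of $M$ and the induced metric on $\Gamma_f$: Omori controls $\mathrm{Hess}\,f$ in the base metric, whereas the principal curvatures are measured in the induced metric, in which unit tangent vectors have $M$-norm of order $1/\psi(f(x_n))$. Verifying that the two $\mathcal{H}$-terms in \eqref{wpprop01} collapse to exactly $|\mathcal{H}|(1+\text{small})$ (so that the slack produced by the nonvanishing $\langle\nabla f,v_i\rangle_M$ enlarges the $1/(n\psi)$ summand rather than inflating the $|\mathcal{H}|$ coefficient) is the principal bookkeeping obstacle; the hypothesis ``complete base with sectional curvature bounded below'' is precisely what legitimates the appeal to Omori.
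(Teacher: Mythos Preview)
Your argument is correct, but it takes a different route from the paper's. The paper does \emph{not} apply Omori--Yau to $f$ itself; instead it passes through the conformal change $\tau:M_\psi\times I\to M\times J$ induced by the primitive $\sigma$ of $-1/\psi$, sets $\widehat f=\sigma\circ f$, and applies Omori--Yau to $\widehat f$ on the Riemannian product. In that setting the shape operator formula collapses to $\widehat\lambda_i=\mathrm{Hess}\,\widehat f(v_i,v_i)/\widehat{\cW}$, so the Hessian bound gives $\widehat\lambda_i<1/n$ with no residual terms; the warped principal curvatures are then recovered via the clean identity \eqref{Eq-PrinCurvWarpProd}, $\lambda_i=\widehat\lambda_i/\psi-\mathcal H\cdot\Theta$, and the inequality $|\Theta|\le 1$ delivers the coefficient $1$ in front of $|\mathcal H|$ exactly, with no $o(1)$ correction to absorb. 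For part~(ii) the paper uses an orientation-reversing isometry $M_\psi\times I\to M_{\overline\psi}\times(-I)$ to reduce to part~(i), rather than re-running Omori on $f$.

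Your direct approach---Omori on $\pm f$ and substitution into \eqref{wpprop01}---is more elementary in that it avoids introducing $\sigma$, $\tau$, and the conformal machinery, at the price of the bookkeeping you flag: the $|\mathcal H|$ term carries a factor $1+\langle\nabla f,v_i\rangle_M^2$, and one must argue that $\psi(f(x_n))$ and $|\mathcal H(f(x_n))|$ stay in a compact range (which follows from $f(x_n)\to\inf f\in I$) in order to choose $\alpha_n,\beta_n$ small enough to push the surplus into $1/(n\psi)$. The paper's route sidesteps this entirely because the angle function absorbs all the gradient dependence at once. On the other hand, the paper's later Section~4 relies heavily on $\sigma$ and whether $1/\psi\in L^1(I)$, so introducing $\sigma$ here is also an investment in machinery reused downstream.
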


In order to establish the proposition above we need to introduce some 
notation. 
Given $t_0\in I$ and $\sigma_0\in \R$, consider the 
function 
$\sigma: I \rightarrow J:=\sigma(I)\subset \R$  
defined by 
\begin{equation}\label{eq028}
	\sigma(t): = \sigma_0-\int^{t}_{t_0}\frac{1}{\psi(u)}du. 
\end{equation}
The map 
$\tau: M_{\psi}\times I \rightarrow M\times J\subset M\times \R$, defined by 
$\tau(x,t) := (x,\sigma(t))$,
is a conformal reversing orientation diffeomorphism. More 
precisely,
\begin{equation}\label{Eq-Tau-Conform}
	\langle 
	v,w\rangle_{\psi}=\psi^2(t) \langle 
	d\tau_{(x,t)}(v),d\tau_{(x,t)}(w)\rangle_{M\times\mathbb{R}},
\end{equation}
for all $v,w \in T_{(x,t)} (M\times I)$ (\cf \cite[Section 2.3]{AD1}), where 
$\langle \cdot, \cdot \rangle_{M\times \R}$ is the usual product metric. 
Define the function $\widehat{f} := \sigma\circ f$ and let $\widehat{\phi}$ 
be the canonical 
parametrization of $\Gamma_{\widehat{f}}$, the graph of $\widehat{f}$,
in 
$M\times\mathbb{R}$. Since
\begin{equation*}
	\widehat{\phi}(x) = \left(x,(\sigma\circ f)(x)\right) 
	= \tau(x,f(x)) = (\tau\circ \phi)(x),
\end{equation*}
it follows from  \Eqref{Eq-Tau-Conform} that   
\begin{equation}\label{Eq-InducedMetric}
	\langle d\widehat\phi(\cdot),d\widehat\phi(\cdot)\rangle_{M\times 
		\R} = \frac{1}{(\psi\circ 
		f)^2}\left\langle d\phi(\cdot) , d\phi(\cdot) 
	\right\rangle_{\psi}.
\end{equation}
We choose the following unit normal vector to the $\Gamma_{\widehat{f}}$
\begin{equation}\label{Eq-Normal-Graph-Product}
	\widehat{\eta}(\widehat{\phi}(x)):=\tau_{\ast}\left((\psi\circ f)(x)\cdot 
	\eta(\phi(x))\right)  = 
	\frac{1}{\widehat{\cW}(x)}\left( 
	-\widetilde{\nabla 
		\widehat{f}(x)} + \partial_t \left( \widehat{\phi}(x)\right)\right),
\end{equation} 
where 
\begin{equation}\label{eq002}
	\widehat{\cW}(x):=\sqrt{1+|\nabla \widehat{f}(x)|^2} = 
	\frac{\cW(x)}{\psi(f(x))}.
\end{equation}

%\begin{remark}\label{obs03}
%Throughout this paper, we will consider  
%$M\times\mathbb{R}$ and $\Gamma_{\widehat{f}}$ equipped with the 
%product metric and the induced metric \eqref{Eq-InducedMetric}, 
%respectively. 
%\end{remark} 

%As a consequence we obtain the relation of the principal curvatures and the 
%mean curvatures of $\Gamma_f$ 
%and $\Gamma_{\widehat{f}}$ (compare with \cite[Remark 3.3]{AD}).
Let $\Theta:\Gamma_f\to 
[-1,0)$ be the angle function $\Theta := \langle \eta,\partial_t\rangle$.
Using equations \eqref{wpprop01}, 
\eqref{Eq-Tau-Conform} and 
\eqref{Eq-InducedMetric}, the relation between $A$ and $\widehat{A}$, the 
second fundamental 
forms of $\Gamma_f$ in $M_{\psi}\times I$ and 
$\Gamma_{\widehat{f}}$ in $M\times \mathbb{R}$ 
with respect to the $\eta$ and $\widehat{\eta}$, is given by
\begin{equation}\label{eq012}
\widehat{A}( d\widehat{\phi}_x (v)) = (\psi\circ f)(x) \cdot 
d\tau_{\phi(x)} \Bigl( 
( A + ((\mathcal{H}\circ f) \cdot (\Theta\circ \phi))(x) \cdot Id 
)\left( 
d\phi_x(v)\right)\Bigr),
\end{equation} 
for all $x\in M$ and $v\in T_x M$, where $Id$ is the identity operator of
$T_{\phi(p)}\Gamma_f$. From this equation we easily deduce that
%\Eqref{Eq-PrinCurvWarpProd} and then \Eqref{eq013}.
%\begin{lemma}\label{Lem-MeanCurvComp}
the principal curvatures $\lambda_i$ of $\Gamma_f$ in 
	$M_{\psi}\times I$ 
	and $\widehat \lambda_i$ of $\Gamma_{\widehat{f}}$ in $M\times 
	\mathbb{R}$ are  
	related by (see Guan and Spruck \cite[Section $2$]{GS} for similar 
	formulas) 
	\begin{equation}\label{Eq-PrinCurvWarpProd}
		\widehat\lambda_i\circ \widehat\phi= (\psi\circ f)\Bl 
		\lambda_i\circ {\phi} + 
		(\mathcal{H}\circ f)\cdot (\Theta\circ\phi)\Br.
	\end{equation}
	As a consequence, the mean curvatures $H$ of $\Gamma_f$ and $\widehat{H}$ 
	of $\Gamma_{\widehat{f}}$ satisfy 
	\begin{equation}\label{eq013}
		\widehat{H}\circ \widehat\phi = (\psi\circ f)\Bigl( H\circ\phi + 	
		(\mathcal{H}\circ f)\cdot (\Theta \circ \phi) 
		\Bigr).
	\end{equation}
%
%%\end{lemma}
%
%
%\begin{proof}
%	
%\end{proof}
%We observe that the equations \eqref{Eq-PrinCurvWarpProd} and 
%\eqref{eq013} are versions of similar equations in
%. 

Now we are in condition to present the proof of Proposition 
\ref{Prop-PrinCurvRefin}.

\begin{proof}[Proof of Proposition \ref{Prop-PrinCurvRefin}]
	Consider the function $\hat{f} = \sigma\circ f$ as defined before. 
	
	(i) First we assume that $f$ is bounded from below, so $\hat{f}$ is 
	bounded from above.
	Using the Omori-Yau 
	maximum principle \cite{OM,STY} there is a sequence $(x_n)\subset M $ 
	such 
	that 
	$\widehat{f}(x_n)\to \sup \widehat{f}$, $\|\nabla \widehat{f}(x_n)\| \to 
	0$ and
	\begin{equation*}
	\textnormal{Hess}\; \widehat{f}_{x_n}(v,v) < \frac{\|v\|^2}{n}, \qquad 
	\forall 
	v\in 
	T_{x_n} M, \;\; v\not = 0,
	\end{equation*}
	for $n$ sufficiently large. 
	Let $\widehat{\lambda}_i$ be the principal curvatures of 
	$\Gamma_{\widehat{f}}$ at $\widehat{\phi}(x_n)$ for $i=1,\ldots, m$. 
	Consider 
	$\{\widehat{e}_i\}_{i=1}^m$ be an orthonormal base of eigenvectors of 
	$\widehat{A}$
	associated to the principal curvatures. For each $i$ denote by $v_i\in 
	T_{x_n}M$ the unique vector such that $d\widehat{\phi}(x_n) (v_i) = 
	\widehat{e}_i$. 
	Then,
%	as in the proof of \cite[Theorem 1.3]{FF}, there is a 
%	sequence $(x_n)\subset 
%	M$ such that 
	\begin{equation*}
	\widehat\lambda_i(\widehat\phi(x_n) ) = \langle 
	\widehat{A}\widehat{e}_i,\widehat{e}_i\rangle_{M\times \mathbb{R}} = 
	\frac{\textnormal{Hess}\; 
	\widehat{f}_{x_n}(v_i,v_i)}{\widehat{W}(x_n)} 
	\leq 
	\frac{1}{n},\qquad 
	\forall i 
	=1\,\ldots,m.
	\end{equation*}
	Then from \Eqref{Eq-PrinCurvWarpProd} we obtain 
	\Eqref{Eq-PrinCurvEstpositive}. We observe that, since $\sigma$ is a 
	decreasing function $\sup \widehat{f} = 
	\sigma(\inf f)$, thus $f(x_n) \to \inf f$.
	
	\medskip
	
	(ii) To prove item (ii), let $\cM : I \to 
	-I$ be the 
	reverse 
	orientation diffeomorphism 
	$\cM(t) = -t$, where $-I = \setdef{t\in \R}{-t\in I}$, and 
	define 
	$\overline{\psi} = \psi \circ \cM^{-1}$.
	Consider the reverse orientation isometry (see \cite[pag. 123]{HM}),
	\begin{equation*}
		\varphi: M_{\psi} \times I \to M_{\overline{\psi}}\times 
		(-I),
	\end{equation*}
	given by $\varphi(x,t) = (x,\cM(t))$. Now define 
	$\overline{f}:=\cM\circ f$ and $\overline{\phi} := \varphi \circ 
	\phi$. As $\varphi$ is a reversing orientation isometry, the 
	principal curvature of $\Gamma_{\overline{f}}$ and $\Gamma_f$ with 
	respect to unit normal vector field (\ref{wp4}) are 
	related by $\overline{\lambda}_i(\overline{\phi}(x)) = - 
	\lambda_i(\phi(x))$.
	Thus the inequality \eqref{Eq-Princ.CurvEstnegative} is obtained using 
	the same ideas as in the first part applied to  
	$\overline{f} = \mathcal{M}\circ f$ in $M_{\overline{\psi}}\times (-I)$. 
\end{proof}

Using the Proposition \ref{Prop-PrinCurvRefin}, we obtain below a 
estimate for 
the mean curvature $H$ 
of a graph contained in a slab. Remember that the 
mean curvature $\H$ of a slice $M\times \{t\}$ is given by $(\ln \psi(t))'$, 
see 
\Eqref{Eq-MeanCurvSlice}.

\begin{theorem}\label{Theo-MeanCurvOmoriYau-f}
	Let $M$ be a complete Riemannian manifold with sectional curvature 
	boun\-ded below. 
	If $\Gamma_f\subset M_\psi \times \R$ is contained in a slab, then its mean 
	curvature $H$ satisfies 
	\begin{equation}\label{eq09}
	|\H(\inf f)|, |\H(\sup f)|\in \left[\inf_{\Gamma_f} |H|, 
	\;\sup_{\Gamma_f} 
	|H|\right].
	\end{equation}
	In particular,
	\begin{equation}\label{Eq-MeanCurvfBounded}
	\inf_{\Gamma_f} |H| \leq \sup_{f(M)} |\H|.
	\end{equation}
%	More precisely, there are sequences $(x_n),(y_n) \subset M$ 
%		such that $H(\phi(x_n))\to \H(\inf f)$, $H(\phi(y_n))\to \H(\sup 
%		f)$ and 
%		
\end{theorem}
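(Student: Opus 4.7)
The plan is to apply Proposition \ref{Prop-PrinCurvRefin} twice---once for each boundedness direction of $f$---and to average the principal curvature estimates in order to obtain matching bounds for the mean curvature $H=\frac{1}{m}\sum_{i=1}^{m}\lambda_i$. Since $\Gamma_f$ lies in a slab, $f$ is bounded both below and above, so both items of the proposition apply. By item (i) I would obtain a sequence $(x_n)\subset M$ with $f(x_n)\to\inf f$ and, for $n$ sufficiently large,
\begin{equation*}
\lambda_i(\phi(x_n)) < \frac{1}{n\psi(f(x_n))} + |\H(f(x_n))|, \qquad i=1,\ldots,m.
\end{equation*}
Averaging these $m$ inequalities and using the continuity of $\psi>0$ and of $\H$ at $\inf f$ yields
\begin{equation*}
\inf_{\Gamma_f} H \;\le\; \limsup_{n\to\infty} H(\phi(x_n)) \;\le\; |\H(\inf f)|.
\end{equation*}
A completely symmetric application of item (ii) to a sequence $(y_n)$ with $f(y_n)\to\sup f$, after averaging the estimate $-\lambda_i(\phi(y_n))<\frac{1}{n\psi(f(y_n))}+|\H(f(y_n))|$, gives $\sup_{\Gamma_f} H \ge -|\H(\sup f)|$.

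From these two signed estimates I would derive \eqref{eq09} by distinguishing cases according to the sign behaviour of the continuous function $H$ on the connected graph $\Gamma_f$. If $H$ takes values of both signs, then by continuity $\inf_{\Gamma_f}|H|=0$, so the two lower inequalities in \eqref{eq09} are automatic, and the upper inequalities $|\H(\inf f)|,|\H(\sup f)|\le \sup_{\Gamma_f}|H|$ follow from $\sup_{\Gamma_f}|H|\ge\max(-\inf_{\Gamma_f}H,\sup_{\Gamma_f}H)$ combined with the signed bounds above. If instead $H$ has constant sign on $\Gamma_f$, then $|H|=\pm H$ everywhere and the two signed estimates transfer directly into the required bounds on $|H|$. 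Finally, \eqref{Eq-MeanCurvfBounded} is immediate from \eqref{eq09} since $\sup_{f(M)}|\H|\ge \max(|\H(\inf f)|,|\H(\sup f)|)$.

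The main obstacle is precisely this sign-tracking at the end: Proposition \ref{Prop-PrinCurvRefin} only delivers one-sided principal curvature bounds (upper at $x_n$, lower at $y_n$), so passing from the signed bounds $\inf H\le|\H(\inf f)|$ and $\sup H\ge-|\H(\sup f)|$ to the two-sided absolute-value statement \eqref{eq09} requires the case analysis above. An alternative would be to extract the sharper signed bounds $\inf H\le \H(\inf f)$ and $\sup H\ge \H(\sup f)$ by revisiting the proof of Proposition \ref{Prop-PrinCurvRefin} and exploiting the fact that the angle function $\Theta$ tends to $-1$ along the Omori--Yau sequences, which removes the absolute values and streamlines the bookkeeping.
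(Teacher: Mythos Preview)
Your primary approach has a genuine gap. Averaging the one-sided principal curvature estimates from Proposition~\ref{Prop-PrinCurvRefin} yields only
\[
\inf_{\Gamma_f} H \le |\H(\inf f)| \qquad\text{and}\qquad \sup_{\Gamma_f} H \ge -|\H(\sup f)|,
\]
and these two inequalities are far too weak to recover \eqref{eq09}. The case analysis you sketch does not close the gap: in the ``both signs'' case you claim that $|\H(\inf f)|\le\sup_{\Gamma_f}|H|$ follows from $\sup_{\Gamma_f}|H|\ge\max(-\inf_{\Gamma_f} H,\sup_{\Gamma_f} H)$ combined with the signed bounds, but the signed bounds point the wrong way---from $\inf_{\Gamma_f} H\le|\H(\inf f)|$ you only get $-\inf_{\Gamma_f} H\ge -|\H(\inf f)|$, which is useless since the right-hand side is nonpositive. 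Nothing in your derived inequalities prevents, say, $\sup_{\Gamma_f}|H|=1$ while $|\H(\inf f)|=5$. The constant-sign case has the same defect: if $H\ge 0$ you obtain $\inf_{\Gamma_f}|H|\le|\H(\inf f)|$, but you have no mechanism to prove $|\H(\inf f)|\le\sup_{\Gamma_f}|H|$, since item~(ii) of the proposition gives information only at $\sup f$, not at $\inf f$.

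What is actually needed---and what you relegate to an ``alternative'' that merely ``streamlines the bookkeeping''---is the exact limit $\lim_{n\to\infty}H(\phi(x_n))=\H(\inf f)$, not just an upper bound. The paper obtains this by going back to the Omori--Yau sequence produced in the proof of Proposition~\ref{Prop-PrinCurvRefin}: along $(x_n)$ one has $\|\nabla\widehat f(x_n)\|\to 0$, hence $\|\nabla f(x_n)\|\to 0$ and $\Theta(\phi(x_n))\to -1$, while $\widehat H(\widehat\phi(x_n))\to 0$. Plugging into the mean-curvature relation \eqref{eq013} then gives $H(\phi(x_n))\to\H(\inf f)$. Since $\H(\inf f)$ is a limit of values of $H$, one has immediately $|\H(\inf f)|\in[\inf_{\Gamma_f}|H|,\sup_{\Gamma_f}|H|]$, and the reflection $\overline f=\mathcal M\circ f$ handles $\sup f$. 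So your ``alternative'' is not a cosmetic improvement but the essential ingredient; the black-box inequalities of Proposition~\ref{Prop-PrinCurvRefin} alone do not suffice.
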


\begin{proof}
	By Proposition \ref{Prop-PrinCurvRefin}, there is 
	a sequence 
	$(x_n)\subset M$ such that $\widehat{f}(x_n) \to \sup \widehat{f}$, 
	and 
	$|\widehat{H}(\widehat{\phi}(x_n))| \to \inf |\widehat{H}| = 0$. 
	Since $\|\nabla \widehat f(x_n)\|\to 0$, we have that $\|\nabla 
	f(x_n)\|\to 
	0$.
	Using \Eqref{eq013} and taking the limit for $n\to\infty$, we obtain that
	\begin{equation}\label{Eq-EqualityMeanCurvGraphSlice}
	\begin{aligned}
	\lim_{n\to \infty} H(\phi(x_n)) = \lim_{n\to 
		\infty}\left(\frac{\widehat{H}(\widehat{\phi}(x_n))}{\psi(f(x_n))}  
	+ \frac{\psi(f(x_n))}{\cW(x_n)} \H(f(x_n))\right)
	= \H(\inf f).
	\end{aligned}
	\end{equation}
	Consequently,
	\begin{equation*}
	\begin{aligned}
	\inf_{\Gamma_f} |H| \leq 
	\lim_{n\to \infty} |H(\phi(x_n))| = |\H(\inf f)|\leq \sup_{\Gamma_f} |H|.
	\end{aligned}
	\end{equation*}

	As in the proof of Proposition 
	\ref{Prop-PrinCurvRefin} item (ii), 
	consider the function 
	$\overline{f}=\cM\circ f$ in the place of $f$. Note that, if $(y_n)$ is a 
	sequence such that $\overline{f}(y_n) \to \inf \overline{f}$ then 
	$f(y_n)\to\sup f$. Thus,
	using $\overline{f}$ instead of $f$ we obtain the other claim of 
	the	statement.
\end{proof}

\begin{remark}\label{Rem-1}
	When $f$ is only bounded from below (resp. above), it is possible to 
	obtain the 
	estimate \eqref{Eq-MeanCurvfBounded}, supposing that the mean curvature of 
	$\Gamma_f$ is non-negative 
	(resp. non-positive) and $\inf(\psi\circ f)>0$. In this case, we only have
	\begin{equation}\label{Eq-fBoundBel}
	\inf_{\Gamma_f} H \leq \H(\inf f)\qquad 
	\Bl\text{resp.}\;\;\inf_{\Gamma_f} |H| \leq 
	|\H(\sup 
	f)|\Br.
	\end{equation}
\end{remark}
\begin{remark}
%	It is worth to mention that the previous hypothesis can not be weakened. 
%	In fact, 
	In  
	\cite[Theorem 2]{SAL2}, it is constructed 
	a bounded from below function, $f: 
	\mathbb{H}^2\to \R$, whose the graph $\Gamma_f\subset \mathbb{H}^2\times 
	\R$  has constant mean curvature $H= -1/2$, with 
	respect to the unit normal vector field $\eta$.
	This shows that the Remark \ref{Rem-1} is not 
	true when $f$ is
	bounded above and $H\geq 0$ (resp. $f$ bounded below and $H\leq 0$). 
\end{remark}

\begin{cor}\label{Corol-MeanCurvature-Slice}
	Let $M$ be a complete Riemannian manifold with sectional curvature 
	boun\-ded below. Suppose that no two slices have the same mean curvature. 
	Then the unique graphs with constant mean curvature contained in a slab  
	are 
	the slices.
\end{cor}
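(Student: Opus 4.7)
The plan is to reduce the problem to deducing the equality $\mathcal{H}(\inf f) = \mathcal{H}(\sup f)$ by invoking the limiting argument in the proof of Theorem \ref{Theo-MeanCurvOmoriYau-f}, and then to apply the injectivity hypothesis on $\mathcal{H}$. The converse direction, that slices have constant mean curvature, is immediate from \eqref{Eq-MeanCurvSlice}, so the content is the forward direction.

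Suppose $\Gamma_f \subset M_\psi \times \R$ has constant mean curvature $H \equiv c$ and is contained in a slab, so that $f:M\to\R$ is bounded. I would extract from the chain of equalities culminating in \eqref{Eq-EqualityMeanCurvGraphSlice} the \emph{signed} (not merely absolute) statement that there is a sequence $(x_n)\subset M$ with $f(x_n) \to \inf f$ and $H(\phi(x_n)) \to \mathcal{H}(\inf f)$. Since $H\equiv c$, this immediately gives $c=\mathcal{H}(\inf f)$.

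Next, I would repeat the same argument for the function $\overline{f} = \cM\circ f$ on $M_{\overline{\psi}}\times(-I)$, following part (ii) of the proof of Proposition \ref{Prop-PrinCurvRefin}. Because the reversing-orientation isometry $\varphi$ flips all principal curvatures, the graph $\Gamma_{\overline{f}}$ has constant mean curvature $-c$. A direct computation from $\overline{\psi}(s)=\psi(-s)$ gives $\overline{\mathcal{H}}(s) = -\mathcal{H}(-s)$, and since $\inf\overline{f} = -\sup f$, the analogue of \eqref{Eq-EqualityMeanCurvGraphSlice} now reads
\begin{equation*}
-c = \overline{\mathcal{H}}(\inf\overline{f}) = \overline{\mathcal{H}}(-\sup f) = -\mathcal{H}(\sup f),
\end{equation*}
whence $c=\mathcal{H}(\sup f)$.

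Combining the two identities yields $\mathcal{H}(\inf f)=\mathcal{H}(\sup f)$, and the hypothesis that no two slices share the same mean curvature forces $\inf f=\sup f$. Hence $f$ is constant and $\Gamma_f$ is a slice. The only subtle point in this plan is the sign bookkeeping in the second step; it is the reason I invoke the proof of Theorem \ref{Theo-MeanCurvOmoriYau-f} rather than its bare statement \eqref{eq09}, which only yields $|\mathcal{H}(\inf f)| = |\mathcal{H}(\sup f)|$ and is insufficient for applying injectivity of $\mathcal{H}$.
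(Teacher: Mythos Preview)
Your proposal is correct and follows essentially the same route as the paper: the paper's proof simply asserts ``By the last theorem $\mathcal{H}(\inf f)=\mathcal{H}(\sup f)$, therefore $f$ is constant,'' which relies on the signed limit \eqref{Eq-EqualityMeanCurvGraphSlice} established in the proof of Theorem~\ref{Theo-MeanCurvOmoriYau-f} rather than on the absolute-value statement \eqref{eq09}. You have made this dependence explicit and carried out the sign bookkeeping for the reflected function $\overline{f}$ that the paper leaves implicit, so your argument is in fact a more carefully written version of the same proof.
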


\begin{proof}
	Let $\Gamma_f$ be a graph contained in a slab with constant mean 
	curvature. By the last theorem $\H(\inf f) = \H(\sup f)$. Therefore, $f$ 
	is constant.
\end{proof}

%%%%%%%%%%%%%%%%%%%%%%%%%%%%%%%%%%
%\section{Estimates for the Scalar curvature and the norm of the shape 
%operator}\label{Sec-Est-Scalar-Second}
%%%%%%%%%%%%%%%%%%%%%%%%%%%%%%%%%%

In a similar way that the mean curvature of the slices has strong influence 
on 
the mean curvature of graphs, we will see that the scalar curvature and the 
norm of the shape operator of the later is also greatly influenced by the 
sectional curvature and the normal sectional curvature of the former.

The following lemma shows that, under appropriate conditions on the sectional 
curvature of the fiber,  it is possible to relate directly the scalar 
curvature of a graph in the warped product $M_{\psi}\times I$ with $2$-th 
mean curvature of the inclusion map. This result will be useful to prove our 
main theorems about 
the scalar curvature and the norm shape operator of a graph. 

\begin{lemma}\label{ecpprop3}
	Suppose there exist $\alpha\leq 0 $ and $\beta \geq 0$ such that
	\begin{equation}\label{eq08}
		\alpha \leq \cK\circ\phi +\H'\circ f \leq \beta,
	\end{equation}
	for all $p\in M$. Then  
	\begin{equation}\label{eq16}
		\alpha  \leq  R\circ \phi - 
		H_2\circ \phi -\cK^{\perp}\circ f\leq 
		\beta,  
	\end{equation}
	where $H_2$ is $2$-th mean curvature and $R$ is 
	the scalar curvature  of $\Gamma_f$ in 
	$M_{\psi}\times I$.
\end{lemma}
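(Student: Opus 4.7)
The plan is to express $R\circ\phi - H_2\circ\phi - \cK^\perp\circ f$ at each point $\phi(p)$ as a linear combination, with nonnegative coefficients of total weight at most one, of quantities of the form $\cK(\pi) + \H'$ (each of which lies in $[\alpha,\beta]$ by hypothesis); the sign condition $\alpha \leq 0 \leq \beta$ then forces $R - H_2 - \cK^\perp$ into $[\alpha,\beta]$ as well.

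Fix $p \in M$ and choose an orthonormal basis $\{e_1,\dots,e_m\}$ of $T_{\phi(p)}\Gamma_f$ adapted to the product structure: when $\nabla f(p) \neq 0$, set $e_1 = \partial_t^{\top}/\|\partial_t^\top\|$ and complete with $e_2,\dots,e_m$ orthogonal to $\partial_t$ (and hence horizontal); when $\nabla f(p) = 0$, take any horizontal orthonormal basis. A direct computation from \Eqref{wp4} gives $e_1 = s\,\partial_t + c\,u_1$, where $u_1$ is the unit horizontal vector along $\widetilde{\nabla f}$, $s = \|\nabla f(p)\|/\cW(p)$, $c = (\psi\circ f)(p)/\cW(p)$, and $s^2 + c^2 = 1$.

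Using Proposition \ref{ecpprop2}, I compute the ambient sectional curvatures of the $\binom{m}{2}$ tangent 2-planes of $\Gamma_f$. For horizontal pairs ($i, j \geq 2$), \Eqref{Eq-TangSecCurv} gives $K^{M_\psi\times I}(e_i, e_j) = \cK(e_i, e_j) - \H^2$. For mixed pairs ($i=1$, $j \geq 2$), the cross-terms in the bilinear expansion of $\langle \Re_\psi(e_1, e_j)e_j, e_1\rangle$ vanish, since item (4) of Proposition \ref{ecpprop2} makes $\Re_\psi(\partial_t, e_j)e_j$ vertical (hence orthogonal to $u_1$) while item (5) makes $\Re_\psi(u_1, e_j)e_j$ horizontal (hence orthogonal to $\partial_t$); thus
\[
K^{M_\psi\times I}(e_1, e_j) = s^2\,\cK^\perp + c^2\bigl(\cK(u_1, e_j) - \H^2\bigr).
\]
Under the normalized conventions $R = \mathrm{Scal}/[m(m-1)]$ and $H_2 = \sigma_2(\lambda)/\binom{m}{2}$, the Gauss equation gives $R - H_2 = \binom{m}{2}^{-1}\sum_{i<j}K^{M_\psi\times I}(e_i, e_j)$.

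Now use the identity $\cK - \H^2 = (\cK + \H') + \cK^\perp$, which follows from $\H = \psi'/\psi$ and $\cK^\perp = -\psi''/\psi$. After substitution, each horizontal term contributes one $\cK^\perp$ and each mixed term contributes $(s^2 + c^2)\cK^\perp = \cK^\perp$, for a total of $\bigl[(m-1) + \binom{m-1}{2}\bigr]\cK^\perp = \binom{m}{2}\cK^\perp$. Dividing by $\binom{m}{2}$ produces exactly the $-\cK^\perp$ absorbed on the left, leaving
\[
R - H_2 - \cK^\perp = \binom{m}{2}^{-1}\biggl[c^2\sum_{j=2}^m\bigl(\cK(u_1, e_j) + \H'\bigr) + \sum_{2\leq i<j\leq m}\bigl(\cK(e_i, e_j) + \H'\bigr)\biggr].
\]
Each summand lies in $[\alpha,\beta]$, and the nonnegative weights ($c^2$ or $1$) sum to $(m-1)c^2 + \binom{m-1}{2} \leq \binom{m}{2}$, so their normalized total lies in $[0,1]$. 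Combined with $\alpha \leq 0 \leq \beta$, this forces $R - H_2 - \cK^\perp \in [\alpha,\beta]$, establishing \eqref{eq16}. The main obstacle is the combinatorial bookkeeping in this substitution step, verifying that the surplus $\cK^\perp$ contributions total exactly $\binom{m}{2}\cK^\perp$; the sign hypothesis $\alpha \leq 0 \leq \beta$ is indispensable, since otherwise the normalized weight being strictly less than one could push the quantity outside $[\alpha,\beta]$.
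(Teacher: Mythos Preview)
Your proof is correct and follows essentially the same underlying idea as the paper's: for each tangent $2$-plane of $\Gamma_f$, the ambient sectional curvature splits as $\cK^\perp$ plus a nonnegative multiple (at most one) of $\cK+\H'$, and then one averages and invokes $\alpha\le 0\le\beta$. The paper arrives at this via a general orthonormal (eigen)basis of $A$, producing the identity $\langle \Re_\psi(e_i,e_j)e_j,e_i\rangle_\psi=(\cK+\H')\,|\widetilde v_i\wedge\widetilde v_j|_\psi^2+\cK^\perp$ with the weight $|\widetilde v_i\wedge\widetilde v_j|_\psi^2\le 1$, and then passes through the Ricci curvature before summing to the scalar curvature. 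Your route differs in two organizational points: you pick a basis adapted to the splitting (one vector along $\partial_t^\top$, the rest horizontal), which makes the weight explicit as $c^2$ or $1$ and replaces the lengthy general computation by two short cases; and you go directly to $R-H_2$ via $\binom{m}{2}(R-H_2)=\sum_{i<j}K^{M_\psi\times I}(e_i,e_j)$, bypassing the intermediate Ricci estimate. The substance is the same (in your adapted basis one has $|\widetilde v_1\wedge\widetilde v_j|_\psi^2=c^2$ and $|\widetilde v_i\wedge\widetilde v_j|_\psi^2=1$ for $i,j\ge 2$, so your weights are exactly the paper's), but your presentation is somewhat more streamlined; the paper's version, on the other hand, records the pointwise Ricci inequality as a by-product.
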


	\begin{proof}		
		Fix $p\in M$ and consider an orthonormal 
		basis $\{e_1, ...,e_m\}$ of $T_{\phi(p)} \Gamma_f$ such that 
		$A(e_i)= \lambda_i e_i$ for all $i=1,\ldots,m$, and let $v_i 
		= (d\phi_p)^{-1}(e_i)$. Denote by $\Re$, $\Re^{M}$ and 
		$\Re_\psi$ the curvature tensors of $\Gamma_f$, $M$ and 
		$M_{\psi}\times I$, respectively. It follows from the Gauss 
		equation that   
		\begin{equation}\label{ecp3}
		\begin{aligned}
		(m-1) \Ric_{\phi(p)} \left( e_j\right) &= \sum^{m}_{i=1,i\not = j} 
		\left\langle 
		\Re\left( e_i, e_j\right)e_j, e_i \right\rangle_{\psi} 
		\\
		&= \sum^{m}_{i=1, i\neq j} 
		\left\langle \Re_\psi\left( e_i, e_j\right)e_j, 
		e_i \right\rangle_\psi + \lambda_j(\phi(p))(mH-\lambda_j(\phi(p))),
		\end{aligned}
		\end{equation}
		where $\Ric_{\phi(p)}$ is the Ricci curvature of $\Gamma_f$ at 
		$\phi(p)$.
		For all $i\not = j$ we have from \Eqref{wp3} and Proposition 
		\ref{ecpprop2} 
		that 
		\begin{equation}\label{ecp22}
		\begin{aligned}
		\left\langle  \Re_\psi\left( e_i, e_j\right)e_j, e_i 
		\right\rangle_{\psi}  
%&=  \psi^2\left\langle 
%		{\Re}^{M}\left(v_i, v_j\right)v_j, v_i\right\rangle_M - 
%		\left(\frac{\psi'}{\psi} \right)^2\Big\{ 
%		|\widetilde{v}_j|^2|\widetilde{v}_i|^2 - \langle \widetilde{v}_i, 
%		\widetilde{v}_j \rangle^2 \Big\} \\
%		&  + 
%		2\left(\frac{\psi''}{\psi}\right)\langle \widetilde{v}_i, 
%		\widetilde{v}_j\rangle\langle \nabla f(p),v_i\rangle \langle \nabla 
%		f(p),v_j\rangle \\
%		& 
%		-\left(\frac{\psi''}{\psi}\right)\Big\{ |\widetilde{v}_i|^2\langle 
%		\nabla f(p),v_j\rangle^2 + \langle \nabla 
%		f(p),v_i\rangle^2|\widetilde{v}_j|^2 \Big\}\\
		 &= 
		\left(\frac{K_M\left(v_i,v_j\right)}{\psi^2(f(p))} + 
		\H'(f(p))-\left(\frac{\psi''}{\psi}(f(p))\right)\right)|\widetilde{v}_i\wedge
		 \widetilde{v}_j|_\psi^2\\
		&
		-\left(\frac{\psi''}{\psi}(f(p))\right)\Big(|\widetilde{v}_i|_\psi^2\left\langle
		 \nabla f(p),v_j\right\rangle_M^2 + 
		 |\widetilde{v}_j|_\psi^2\left\langle 
		\nabla f(p),v_i\right\rangle_M^2\Big)\\
		& 
		+2\left(\frac{\psi''}{\psi}(f(p))\right)\langle \widetilde{v}_i, 
		\widetilde{v}_j\rangle_\psi\left\langle \nabla 
		f(p),v_i\right\rangle_M\left\langle \nabla f(p),v_j\right\rangle_M,
		\end{aligned} 
		\end{equation}
		where $|\widetilde{v_i}\wedge \widetilde{v_j}|_{\psi}^2 = 
		|\widetilde{v_i}|_\psi^2|\widetilde{v_j}|_\psi^2 - \left\langle 
		\widetilde{v_i},\widetilde{v_j}\right\rangle^2_\psi$. 
		By direct computation we have
		\begin{equation*}%\label{ecp21}
		\begin{aligned}
		1=|{e_i}\wedge {e_j}|_{\psi}^2 
		&= |\widetilde{v}_i\wedge \widetilde{v}_j|_{\psi}^2 + 
		|\widetilde{v}_i|_\psi^2\left\langle \nabla f(p),v_j\right\rangle_M^2 
		+ 
		|\widetilde{v}_j|_\psi^2\left\langle \nabla 
		f(p),v_i\right\rangle_M^2\\
		& -2\langle \widetilde{v}_i, \widetilde{v}_j\rangle_\psi\langle 
		\nabla f(p), v_i\rangle_M \langle \nabla f(p), 
		v_j\rangle_M,
		\end{aligned}
		\end{equation*}
		which implies that 
		\begin{equation}\label{Eq-CurvCurvSecGraph}
		\left\langle  \Re_\psi\left( e_i, e_j\right)e_j, e_i 
		\right\rangle_{\psi} = 
		\left(\cK(\phi(p))(v_i,v_j) + 
		\H'(f(p))\right)|\widetilde{v}_i\wedge
		\widetilde{v}_j|_{\psi}^2
		-\left(\frac{\psi''}{\psi}(f(p))\right).
		\end{equation}
		From the hypothesis and \Eqref{Eq-NormalSecCurv}
		we obtain that
		\begin{equation*} 
			\alpha|\widetilde{v}_i\wedge \widetilde{v}_j|_\psi^2\leq 
			\left\langle 
			\Re_\psi\left( e_i, e_j\right)e_j, e_i 
			\right\rangle_\psi 
			-\cK^{\perp}(f(p)) \leq 
		 	\beta|\widetilde{v}_i\wedge \widetilde{v}_j|_\psi^2.
		\end{equation*}
		It follows from 
		the relation  
		$1 = |e_i|_\psi^2 = \psi^2(f(p))|v_i|_M^2 + \left\langle \nabla 
		f(p),v_i\right\rangle_M^2 $
		that  
		\begin{equation*}
			|\widetilde{v}_i\wedge \widetilde{v}_j|_\psi^2 = 
			\psi^4(p)\Bl|v_i|_M^2\cdot |v_j|_M^2 - \left\langle 
			v_i,v_j\right\rangle_M^2 \Br\leq 
			\psi^4(p)\cdot |v_i|_M^2\cdot |v_j|_M^2\leq 1.
		\end{equation*}
		Therefore, 
		\begin{equation}\label{ecp23}
		\alpha \leq\left\langle 
		\Re_\psi\left( e_i, e_j\right)e_j, e_i 
		\right\rangle_{\psi}-\cK^{\perp}(f(p)) \leq  
		\beta. 
		\end{equation}
		By Equations \eqref{ecp3} and \eqref{ecp23}, 
		\begin{equation}\label{ecp6}
		\alpha\leq 
		\Ric_{\phi(p)} \left( e_j\right) 
		-\frac{1}{(m-1)}\lambda_j(\phi(p))\left(mH-\lambda_j(\phi(p))\right)
		-\cK^{\perp}(f(p))\leq 
		\beta. 
		\end{equation}
		Now remembering that
		\begin{equation*}
		H_2(\phi(p)) := \frac{1}{m(m-1)} \sum_{i\not=j} \lambda_i(\phi(p)) 
		\lambda_j(\phi(p)),
		\end{equation*} 
		summing over $j$ from $1$ to $m$ on inequality \eqref{ecp6} and 
		dividing by $m$,
		we prove the result.
	\end{proof}
	We point out that the hypothesis on previous lemma holds for graphs in 
	the hyperbolic space 
	models 
	$\mathbb{R}^n_{e^t}\times\mathbb{R}$ 
	and $\mathbb{H}^n_{\cosh t}\times \mathbb{R}$ for any $\alpha\leq 0$, 
	$\beta\geq 
	0$. We will return to this discussion in Section \ref{Sec-Applications}.

	The hypothesis in Lemma \ref{ecpprop3} is similar to the 
	\textit{convergence 
	condition}, that was used by many authors  \cite{Mon,AGH,HJ,MI,AL,ADR} 
	in formulating criteria for a graph, or more generally 
	a hypersurface, in a warped product space to be a slice.

The next theorem present estimates for the scalar curvature $R$ of a graph 
contained in a slab.

\begin{theorem}\label{Teo-ScalCurv}
	Let $M$ be a complete Riemannian manifold with sectional 
	curvature 
	boun\-ded below and $\Gamma_f\subset M_\psi \times I$ a graph contained in 
	a slab. If there are $\alpha\leq 0$ and $\beta\geq 0$ 
	satisfying inequality \eqref{eq08} over $M$, then the 
	scalar curvature $R$ of $\Gamma_f$ satisfies 
		\begin{equation}\label{Eq-ScalCurvfbounded}
		\begin{aligned}
		\inf_{\Gamma_f} |R| \leq  \beta -\alpha + \min\left\{4\H^2(\inf 
		f)+|\cK^{\perp}(\inf f)|,4\H^2(\sup f) +|\cK^{\perp}(\sup f)|
		\right\}.
%		\\
%		&+ \min\left\{|\cK^{\perp}(\inf f)|,|\cK^{\perp}(\sup 
%		f)|\right\}.
		\end{aligned}
		\end{equation}
\end{theorem}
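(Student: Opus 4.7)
The plan is to combine Lemma~\ref{ecpprop3} with the principal curvature estimates produced by Proposition~\ref{Prop-PrinCurvRefin}. From Lemma~\ref{ecpprop3}, since $\alpha\le 0\le\beta$, one has the pointwise estimate $|R\circ\phi - H_2\circ\phi - \cK^\perp\circ f|\le\max(-\alpha,\beta)\le\beta-\alpha$ on $\Gamma_f$, hence
\[
|R\circ\phi|\le|H_2\circ\phi|+|\cK^\perp\circ f|+(\beta-\alpha).
\]
I would then exhibit a sequence along which the right-hand side is asymptotically bounded by $4\H^2(\inf f)+|\cK^\perp(\inf f)|+(\beta-\alpha)$. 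Proposition~\ref{Prop-PrinCurvRefin}(i) supplies $(x_n)\subset M$ with $f(x_n)\to\inf f$ and
\[
\lambda_i(\phi(x_n))<c_n:=\tfrac{1}{n\,\psi(f(x_n))}+|\H(f(x_n))|,\qquad i=1,\dots,m,
\]
for every principal curvature of $\Gamma_f$. Continuity gives $c_n\to|\H(\inf f)|$ and $\cK^\perp(f(x_n))\to\cK^\perp(\inf f)$, and the computation already carried out in the proof of Theorem~\ref{Theo-MeanCurvOmoriYau-f} (taking the limit in \eqref{eq013}) yields $H(\phi(x_n))\to\H(\inf f)$.

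The key step is to convert the one-sided bound $\lambda_i\le c_n$ into two-sided control on $H_2(\phi(x_n))$. I would set $\mu_i:=c_n-\lambda_i(\phi(x_n))\ge 0$, so that $\sum_i\mu_i=m\bigl(c_n-H(\phi(x_n))\bigr)$, and exploit non-negativity to obtain
\[
\sum_i\mu_i^2\le\Bigl(\sum_i\mu_i\Bigr)^2=m^2\bigl(c_n-H(\phi(x_n))\bigr)^2.
\]
Substituting into $\sum_i\lambda_i^2=mc_n^2-2c_n\sum_i\mu_i+\sum_i\mu_i^2$ and invoking the identity $m(m-1)H_2=m^2H^2-\sum_i\lambda_i^2$, together with the Cauchy--Schwarz lower bound $\sum_i\lambda_i^2\ge mH^2$ for the opposite inequality, a routine algebraic manipulation yields
\[
H(\phi(x_n))^2-\bigl(c_n-H(\phi(x_n))\bigr)^2\le H_2(\phi(x_n))\le H(\phi(x_n))^2.
\]
Consequently $|H_2(\phi(x_n))|\le\max\bigl(H(\phi(x_n))^2,(c_n-H(\phi(x_n)))^2\bigr)$, and letting $n\to\infty$ the right-hand side tends to $\max\bigl(\H^2(\inf f),(|\H(\inf f)|-\H(\inf f))^2\bigr)\le 4\H^2(\inf f)$, the last inequality being valid because each of the two terms in the maximum is at most $4\H^2(\inf f)$.

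Combining these ingredients,
\[
\inf_{\Gamma_f}|R|\le\liminf_{n\to\infty}|R(\phi(x_n))|\le(\beta-\alpha)+4\H^2(\inf f)+|\cK^\perp(\inf f)|.
\]
For the analogous estimate with $\sup f$, I would apply Proposition~\ref{Prop-PrinCurvRefin}(ii) to produce $(y_n)\subset M$ with $f(y_n)\to\sup f$ and $-\lambda_i(\phi(y_n))<c_n'$, then run the same argument with $\mu_i:=\lambda_i(\phi(y_n))+c_n'\ge 0$; this gives the symmetric bound $|H_2(\phi(y_n))|\le\max\bigl(H(\phi(y_n))^2,(c_n'+H(\phi(y_n)))^2\bigr)$, whose $n\to\infty$ limit is bounded by $4\H^2(\sup f)$. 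Taking the minimum of the two resulting estimates delivers \eqref{Eq-ScalCurvfbounded}. The main obstacle is precisely this $|H_2|$ estimate, because Proposition~\ref{Prop-PrinCurvRefin} provides only one-sided control on the principal curvatures; the trick is to couple it with the trace constraint $\sum_i\lambda_i=mH$ (via the non-negativity of the $\mu_i$'s) in order to extract two-sided information about $\sum_i\lambda_i^2$, and hence about $H_2$.
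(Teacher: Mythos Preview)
Your argument is correct and is genuinely different from the paper's. The paper proceeds by a somewhat involved case analysis: it first splits according to the sign of $R$, and in the hardest case ($R<0$ with $\beta+H_2+\cK^\perp<0$ everywhere) it further splits the principal curvatures by sign, writes $\lambda_1\le\cdots\le\lambda_l<0\le\lambda_{l+1}\le\cdots\le\lambda_m$, and estimates the cross-terms $\sum_{i\le l<j}\lambda_i\lambda_j$ separately before invoking Proposition~\ref{Prop-PrinCurvRefin} and \eqref{Eq-EqualityMeanCurvGraphSlice}. Your substitution $\mu_i=c_n-\lambda_i\ge 0$ bypasses all of this: the single inequality $\sum_i\mu_i^2\le\bigl(\sum_i\mu_i\bigr)^2$, valid for non-negative reals, combined with $\sum_i\mu_i=m(c_n-H)$ and the standard identity $m(m-1)H_2=m^2H^2-\sum_i\lambda_i^2$, yields directly the two-sided bound $H^2-(c_n-H)^2\le H_2\le H^2$, from which your $|H_2|$ estimate and the limit follow cleanly. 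Both proofs rely on the same two ingredients (Lemma~\ref{ecpprop3} and Proposition~\ref{Prop-PrinCurvRefin}, together with $H(\phi(x_n))\to\H(\inf f)$ from \eqref{Eq-EqualityMeanCurvGraphSlice}), but your route is shorter and avoids the sign-based case distinctions; the paper's decomposition, on the other hand, makes more visible where the constant $4$ enters and yields slightly sharper intermediate bounds in the individual cases (e.g.\ $\beta+\H^2(\inf f)+\cK^\perp(\inf f)$ when $R>0$), though these are not needed for the stated conclusion.
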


\begin{proof}
	 We can assume that the scalar curvature of the graph $\Gamma_f$ of $f$ 
	 does never vanish, otherwise there is nothing to prove. 
	 Then, by continuity of $R$ and connectedness of $\Gamma_f$, $R$ is always 
	 positive or always negative. 
	 Moreover, from  Lemma \ref{ecpprop3}, 
	 \begin{equation*}
	 R\circ \phi\leq \beta + H_2\circ\phi  +\cK^{\perp}\circ f\qquad 
	 \textnormal{over} \qquad M.
	 \end{equation*}
	 We have three possibilities:
	 \begin{itemize}
	 	\item[{\bf i)}] $R > 0$ in $\Gamma_f$.
	 	
	 	\item[{\bf ii)}] $R<0$ and there exists $p_0 \in M$ such 
	 	that 
	 	\[
	 	\beta+H_2(\phi(p_0)) +\cK^{\perp}(f(p_0)) \geq 0.
	 	\]
	 	
	 	\item[{\bf iii)}] $R<0$ and $\beta+ H_2(\phi(x)) 
	 	+\cK^{\perp}(f(x))<0$ for all 
	 	$x\in M$.
	 \end{itemize}
	 
	 First consider the sequence $(x_n)$ 
	 from Proposition \ref{Prop-PrinCurvRefin}. In this case $f(x_n) \to \inf 
	 f$.
	 
	 Assuming {\bf i)}, applying the inequality \eqref{eq16} at a point 
	 $x_n$, we consider the limit $n\to \infty$ and obtain
	 \begin{equation*}
	 	|R|\leq \beta + \H^2(\inf f)+ \cK^\perp(\inf f).
	 \end{equation*}

	 Assuming {\bf ii)} we obtain, by Lemma \ref{ecpprop3}, 
	 \begin{equation*}
	 {\alpha}-\beta\leq {\alpha}  +\cK^{\perp}(f(p_0)) + H_2(\phi(p_0)) 
	 \leq R(\phi(p_0))<0,  
	 \end{equation*}
	 which implies 
	 \begin{equation*}
	 \inf_{\Gamma_f} |R| \leq |R(\phi(p_0))| = 
	 -R(\phi(p_0))\leq {\beta -\alpha}.  
	 \end{equation*}

	 Assuming {\bf iii)}. The proof of this case follows along the lines of 
	 the 
	 proof of \cite[Theorem 1.2]{FONT} (see \cite[Theorem 1.4]{FF}), we 
	 present only the main differences.
	 
	 If there is a subsequence $(x_{n_k})$ of $(x_n)$ such that 
	 $H_2(\phi(x_{n_k}))\geq 0$ for all $x_{n_k}$, as 
	 $R<0$ and $\alpha\leq 0$ one has 
	 \begin{equation*}
	 0<-R(\phi(x_{n_k}))\leq -\alpha  
	 -\cK^{\perp}(f(x_{n_k}))-H_2(\phi(x_{n_k}))\leq -\alpha 
	 -\cK^{\perp}(f(x_{n_k})),
	 \end{equation*}
	 and so, 
	 $\displaystyle \inf_{\Gamma_f}|R|\leq -\alpha +|\cK^{\perp}(\inf f)|$.
	 
	 On the other hand, if $H_2(\phi(x_n))<0$ for all $n$, we have principal 
	 curvatures of both 
	 signs 
	 at 
	 every point of the sequence $(x_n)\subset M$. Moreover, 
	 by our assumption $\displaystyle 
	 H_2(\phi(x_n))< -\cK^{\perp}(f(x_n))-{\beta}$. 
	 Denoting by  $l$ the 
	 number of 
	 negative principal 
	 curvatures at $\phi(x_n)$, $n$ fixed, we have  
	 \[
	 \lambda_1(\phi(x_n))\leq \ldots\leq \lambda_l(\phi(x_n))<0 \leq 
	 \lambda_{l+1}(\phi(x_n))\leq \ldots \leq 
	 \lambda_m(\phi(x_n)),
	 \]
	 where $1\leq l\leq m-1$. Then, 
	 \begin{equation*}
	 \frac{m(m-1)}{2} H_2(\phi(x_n)) 
	 \geq \sum_{\substack{i=1,...,l \\ j=l+1,...,m}} \lambda_i(\phi(x_n)) 
	 \lambda_j(\phi(x_n)),
	 \end{equation*}
	 and 
	 \begin{equation*}
	 \begin{aligned}
	 0>\frac{m(m-1)}{2}R(\phi(x_n))
	 %		\frac{n(n-1)}{2}\left( H_2(q) + 
	 %		{\alpha}  - \left(\frac{\psi''}{\psi}\right)(f(p)) \right)\\
	 %		\\
	 %		&\geq \sum_{\substack{i=1,...,l\\ j=l+1,...,n}} \lambda_i \lambda_j 
	 %		+ \frac{n(n-1)}{2}\left( {\alpha}  - 
	 %		\left(\frac{\psi''}{\psi}\right)(f(p))\right)\\
	 &\geq \Bl mH(\phi(x_n)) - 
	 \sum^{m}_{i=l+1}\lambda_i(\phi(x_n))\Br\sum^{m}_{i=l+1}
	 \lambda_i(\phi(x_n)) \\
	 &+ \frac{m(m-1)}{2}\left( {\alpha}  +\cK^{\perp}(f(x_n))\right).
	 \end{aligned}
	 \end{equation*}
	 Hence,
\begin{equation}\label{Eq-InEscCurvProfTheo}
\begin{aligned}
\frac{m(m-1)}{2}\inf_{\Gamma_f} |R| 
&\leq \Bl 
m|H(\phi(x_n))|+ \sum^{m}_{i=l+1}\lambda_i(\phi(x_n))\Br 
\sum^{m}_{i=l+1}\lambda_i(\phi(x_n))\\
& -\frac{m(m-1)}{2}\left( {\alpha}  + \cK^{\perp}(f(x_n))\right). 
\end{aligned}
\end{equation}
Then, we apply Proposition \ref{Prop-PrinCurvRefin}, take the limit $n\to 
\infty$ and use
\Eqref{Eq-EqualityMeanCurvGraphSlice} to obtain that
	  \begin{equation*}
	 	\inf_{\Gamma_f} |R| 
	 	\leq 4\H^2(\inf f) -{\alpha}  + |\cK^{\perp}(\inf f)|.
	 \end{equation*}
	 Comparing the estimates from the three cases we conclude that
	 \begin{equation*}
	 	\begin{aligned}
	 		\inf_{\Gamma_f} |R| \leq  \beta -\alpha + 4\H^2(\inf 
	 		f) + |\cK^{\perp}(\inf f)|.
	 	\end{aligned}
	 \end{equation*}
 
	 Analogously, using the sequence $(y_n)$ of Proposition 
	 \ref{Prop-PrinCurvRefin} 
	 instead the sequence $(x_n)$ and following the same steps as before, we 
	 obtain 
	 \begin{equation*}
	 \inf_{\Gamma_f} |R|\leq  4\H^2(\sup f) + \beta -{\alpha}  + 
	 |\cK^{\perp}(\sup f)|. 		
	 \end{equation*} 
	 Then, the result follows.	
\end{proof}		
	
	\begin{remark}\label{Rem-2}
	 If we add the  hypothesis that $H$ does not change sign we can replace 
	 the 
	 boundedness hypothesis on $f$ by boundedness above or below. 
	 More precisely, 
	 if $f$ is only bounded below and the mean curvature 
	 $H$ of the graph does not change the sign we are able to prove that
	 \begin{equation*}
	 \inf_{\Gamma_f} |R| \leq 2 \H^2(\inf f) + \beta - \alpha + 
	 |\cK^\perp(\inf f)|.
	 \end{equation*} 
\end{remark}

Now we pass to estimate the norm of the shape of $\Gamma_f$.  

\begin{theorem}\label{Teo-EntGrapSecForm2}	
	Let $M^m$ be a complete Riemannian manifold with sectional curvature 
	bounded from below and $\Gamma_f\subset M_\psi \times I$ a graph contained 
	in a slab. Denote by $A$ the shape operator of $\Gamma_f$.
	\begin{itemize}
		\item[(i)] If $\dim M = m \geq 3$ and the inequality
		\begin{equation}\label{eq021} 
			\Ric_{\Gamma_f} - \inf_{f(M)} \cK^{\perp}< 
			\inf_{M} 
			(\cK \circ \phi+\H'\circ f)\leq 0,
		\end{equation}
		holds on $M$, 
		then 
		\begin{equation*}
			\inf_{\Gamma_f} |A| \leq 
			3(m-2) \cdot \min \{\mathcal{H}(\inf f)|,|\H(\sup f)|\}.
		\end{equation*}
		
		\item[(ii)]	If the mean curvature of the graph $H$ does not change 
		sign, then
	\begin{equation*}
			\inf_{\Gamma_f} |A| \leq m \cdot \min \{ |\H(\inf f)|,|\H(\sup 
			f)|\}.
	\end{equation*}
	\end{itemize} 
\end{theorem}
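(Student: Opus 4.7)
The plan is to combine Proposition~\ref{Prop-PrinCurvRefin}, which yields a sequence along which every principal curvature obeys a one-sided bound, with the trace identity $\sum_i\lambda_i=mH$ and the asymptotic $H(\phi(x_n))\to \H(\inf f)$ from \eqref{Eq-EqualityMeanCurvGraphSlice} (respectively $H(\phi(y_n))\to \H(\sup f)$), to control $|A|^2=\sum_i\lambda_i^2$.

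For part~(ii) we may assume $H\geq 0$ on $\Gamma_f$, the case $H\leq 0$ being obtained by applying the reversing-orientation isometry used in the proof of Proposition~\ref{Prop-PrinCurvRefin}(ii). Along the sequence $(x_n)$ from Proposition~\ref{Prop-PrinCurvRefin}(i), we have $\lambda_i(\phi(x_n))<\frac{1}{n\psi(f(x_n))}+|\H(f(x_n))|$ for every $i$, and $H(\phi(x_n))\to \H(\inf f)\geq 0$; so the upper bound on each $\lambda_i$ tends to $\H(\inf f)$, while writing $\lambda_j=mH(\phi(x_n))-\sum_{i\neq j}\lambda_i$ and bounding the remaining principal curvatures above by the same quantity yields the matching lower bound $\lambda_j>\H(\inf f)-o(1)$. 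Hence every $\lambda_i(\phi(x_n))\to \H(\inf f)$ and $|A|^{2}(\phi(x_n))\to m\,\H(\inf f)^{2}$, so $\inf_{\Gamma_f}|A|\leq \sqrt{m}\,|\H(\inf f)|\leq m\,|\H(\inf f)|$; repeating the argument with $(y_n)$ when $H\leq 0$ gives the companion bound $m|\H(\sup f)|$.

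For part~(i) the Ricci hypothesis is essential. Starting from the Gauss equation \eqref{ecp3} and the identity \eqref{Eq-CurvCurvSecGraph} derived in the proof of Lemma~\ref{ecpprop3}, one rewrites
\begin{equation*}
\frac{\lambda_j(mH-\lambda_j)}{m-1}=\Ric(e_j)-\cK^{\perp}-\frac{1}{m-1}\sum_{i\neq j}(\cK+\H')\,|\widetilde v_i\wedge \widetilde v_j|_{\psi}^{2}.
\end{equation*}
Since $|\widetilde v_i\wedge \widetilde v_j|_\psi^{2}\in[0,1]$ and $\inf(\cK+\H')\leq 0$, the sum on the right is bounded below by $\inf(\cK+\H')$; combining this with the strict inequality~\eqref{eq021} forces the pointwise dichotomy $\lambda_j(mH-\lambda_j)<0$, i.e., every $\lambda_j$ lies outside the closed interval with endpoints $0$ and $mH$. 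Feeding this dichotomy together with the upper bound $\lambda_i<\frac{1}{n\psi}+|\H|$ from Proposition~\ref{Prop-PrinCurvRefin} into $\sum_i\lambda_i=mH(\phi(x_n))\to m\H(\inf f)$, and splitting the indices by sign exactly as in the passage to \eqref{Eq-InEscCurvProfTheo} in the proof of Theorem~\ref{Teo-ScalCurv}, leads to the estimate $|A|(\phi(x_n))\leq 3(m-2)|\H(\inf f)|+o(1)$, and symmetrically for $(y_n)$.

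The main obstacle is the sharp algebraic step in part~(i): once the dichotomy $\lambda_j\in(-\infty,0)\cup(mH,\infty)$ is in hand, one must balance the one-sided upper bound from Proposition~\ref{Prop-PrinCurvRefin} against the trace constraint $\sum_i\lambda_i=mH$ to estimate $\sum\lambda_j^{2}$, and it is in this combinatorial case analysis on the number of negative principal curvatures that the requirement $m\geq 3$ and the specific constant $3(m-2)$ naturally arise.
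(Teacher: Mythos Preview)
For part~(ii) your squeeze argument is a genuinely different and sharper route than the paper's. From $\lambda_i(\phi(x_n))<\epsilon_n:=\tfrac{1}{n\psi(f(x_n))}+|\H(f(x_n))|$ together with $\sum_i\lambda_i=mH(\phi(x_n))\to m\H(\inf f)\ge0$ you correctly force every $\lambda_i(\phi(x_n))\to\H(\inf f)$, whence $|A|(\phi(x_n))\to\sqrt m\,|\H(\inf f)|$. The paper does not use the trace limit at all; instead it splits by the sign of each $\lambda_i$, uses $H\ge0$ to bound $\sum_{\lambda_i<0}|\lambda_i|\le\sum_{\lambda_i\ge0}\lambda_i\le(m-l)\epsilon_n$, and then estimates $|A|^2\le\bigl((m-l)^2+(m-l)\bigr)\epsilon_n^2\le m^2\epsilon_n^2$, losing a factor of $\sqrt m$ relative to yours. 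One gap in your write-up, however: the sentence ``repeating the argument with $(y_n)$ when $H\le0$'' does not deliver the companion bound $m|\H(\sup f)|$ in the case $H\ge0$ you have already fixed by WLOG. Along $(y_n)$ Proposition~\ref{Prop-PrinCurvRefin} gives only the \emph{lower} bound $\lambda_i>-\delta_n$, and combined with $mH\ge0$ your squeeze no longer closes (the derived upper bound on $\lambda_j$ tends to $(2m-1)\H(\sup f)$, not $\H(\sup f)$). So as written you establish only one of the two terms in the $\min$; the paper asserts the $(y_n)$ bound by rerunning its sign case analysis rather than a squeeze.

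For part~(i) your skeleton matches the paper's: the Ricci hypothesis fed through \eqref{ecp3} and \eqref{Eq-CurvCurvSecGraph} indeed forces $\lambda_j(mH-\lambda_j)<0$, hence nonzero principal curvatures of both signs at every point. The step you flag as the ``main obstacle'' is exactly the substance of the paper's proof and is not mere bookkeeping: after arranging via the orientation-reversing isometry that the number $l$ of negative principal curvatures satisfies $l\ge2$, one sums the inequalities $mH-\lambda_i>0$ over $i\le l$ to obtain $(l-1)\sum_{i\le l}|\lambda_i|<l\sum_{i>l}\lambda_i<l(m-l)\epsilon_n$, hence $\sum_i|\lambda_i|<\tfrac{(m-l)(2l-1)}{l-1}\,\epsilon_n\le3(m-2)\epsilon_n$, the last inequality because this rational function of $l$ is decreasing on $2\le l\le m-1$. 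Note also that the paper does \emph{not} use the limit $H(\phi(x_n))\to\H(\inf f)$ in part~(i); only the pointwise dichotomy and the one-sided upper bound on the positive principal curvatures are needed, so your reference to ``feeding into $\sum_i\lambda_i=mH(\phi(x_n))\to m\H(\inf f)$'' is a detour.
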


\begin{proof}	
	(i) Let us denote by $\lambda_1,\ldots,\lambda_m$ the principal 
	curvatures of 
	the graph $\Gamma_f$, labelled by the condition $\lambda_1\leq 
	\ldots\leq \lambda_m$. 
	From equations \eqref{ecp3}, \eqref{Eq-CurvCurvSecGraph} and inequality 
	\eqref{eq021} we obtain
	\begin{equation}\label{eq17}
	\lambda_i(mH - \lambda_i)<0, \hspace{0.8cm} i=1,\ldots ,m.
	\end{equation}
	The above inequality implies that $\lambda_j\neq 0$, $\forall 
	j$, and that there exist principal curvatures of both signs at every 
	point of $\Gamma_f$. Denoting by $l$ the number of negative principal 
	curvatures, we have at any point of $\Gamma_f$,
	\begin{equation}\label{eq020}
	\lambda_1\leq \ldots \leq \lambda_l<0< \lambda_{l+1}\leq \ldots \leq 
	\lambda_m.
	\end{equation}

	Changing $f$ in $M_{\psi}\times I$ to $\overline{f} = \mathcal{M}\circ f$ 
	in $M_{\overline{\psi}}\times (-I)$, if necessary, we can assume that 
	$2\leq l \leq m-1$. For each $i=1,\ldots,l$, one has by inequality
	\eqref{eq17} that
	\begin{equation*}
	\lambda_1 +\ldots +\widehat{\lambda}_i+\ldots+\lambda_l+\lambda_{l+1} 
	+\ldots+ 
	\lambda_m > 0,
	\end{equation*}
	where $\widehat{\cdot}$ means that we omit this principal curvature in 
	the sum.
	Thus,
	\begin{equation}\label{eq98}
	l\left( \lambda_{l+1}+\ldots+\lambda_m\right) > 
	(l-1)\sum^{l}_{i=1}|\lambda_i|.
	\end{equation}
	By Proposition \ref{Prop-PrinCurvRefin}, there exists a sequence 
	$(x_n)\subset 
	M$ satisfying the inequality \eqref{Eq-PrinCurvEstpositive}. Using this 
	information on inequality \eqref{eq98}, we obtain
	\begin{equation}\label{eq99}
	\sum^{l}_{i=1}|\lambda_i|<%\frac{l}{l-1}\left( 
	%\lambda_{l+1}+...+\lambda_n\right)<
	\frac{l(m-l)}{l-1}\left( \frac{1}{n\psi(f(x_n))} + |\H(f(x_n))|\right),
	\end{equation}
	and so   
	\begin{equation}\label{eq100}
	\sum^{m}_{i=1}|\lambda_i| < 
	\frac{(m-l)(2l-1)}{l-1}\left( \frac{1}{n\psi(f(x_n))} + |\H(f(x_n))|\right).
	\end{equation}
	Thus one has the following estimate for the square of 
	the norm of second fundamental form at $\phi(x_n)$:
	\begin{equation}\label{Eq-SquareSecForm}
	|A|^2\left(\phi(x_n)\right) \leq \left( 
	\sum^{m}_{i=1}|\lambda_i|\right)^2
	< \left[ \frac{(m-l)(2l-1)}{l-1}\left( 
	\frac{1}{n\psi(f(x_n))} + |\H(f(x_n))| \right)\right]^2.
	\end{equation}
	Using that $\frac{(m-l)(2l-1)}{l-1}$ is a decreasing function on $l$ one obtain 
	\begin{equation*}
	\inf_{\Gamma_f}|A| < 3(m-2)\left(\frac{1}{nc} + |\H(f(x_n))|\right), 
	\end{equation*}
	where $c=\inf\limits_M (\psi\circ f)$.
	By taking $n\to +\infty$, we conclude 
	that 
	\begin{equation}\label{Eq-Norm1}
	\inf_{\Gamma_f}|A| \leq 3(m-2)|\H(\inf f)|. 
	\end{equation}
	
	If we use the sequence $(y_n)$ of Proposition \ref{Prop-PrinCurvRefin} 
	instead of the sequence $(x_n)$ and following the steps above we reach at
	\begin{equation}\label{Eq-Norm2}
		\inf_{\Gamma_f}|A| \leq 3(m-2)|\H(\sup f)|. 
	\end{equation}
	With the equations \eqref{Eq-Norm1} and \eqref{Eq-Norm2} we prove the 
	item (i).
	
	\medskip
	
	(ii) We use similar ideas as in the proof of \cite[Theorem 
	1.7]{FONT}. Without loss of generality we can assume that $H\geq 0$. We 
	have two possibilities, or all principal curvatures are non-negative or 
	there are principal curvatures with negative values. First consider the 
	sequence $(x_n)$ of Proposition \ref{Prop-PrinCurvRefin}.
	
	If all principal curvatures are non-negative then
	\begin{equation*}
		|A|^2(\phi(x_n)) = \sum_{i=1}^m \lambda_i^2(\phi(x_n)) 
		\leq m\Bl\frac{1}{nc}+ |\H(f(x_n))| \Br^2,
	\end{equation*}
	where $c=\inf\limits_M (\psi\circ f)$.
	Which implies that
	\begin{equation*}
		\inf_{\Gamma_f} |A| \leq \sqrt{m} \cdot |\H(\inf f)|< m \cdot 
		|\H(\inf f)|.
	\end{equation*}
	
	Now suppose that there are principal curvatures with negative values.  
	Let $l$ be the number of negative principal curvatures such that
	\begin{equation*}
		\lambda_1(\phi(x_n))\leq \ldots \leq
		\lambda_l(\phi(x_n))<0 \leq \lambda_{l+1}(\phi(x_n))\leq \ldots \leq 
		\lambda_m(\phi(x_n)).
	\end{equation*}
 	Since $H\geq 0$ we have
 	\begin{equation*}
 		\sum_{i=1}^{l} |\lambda_i(\phi(x_n))| = - \sum_{i=1}^{l} 
 		\lambda_i(\phi(x_n)) \leq  \sum_{i=l+1}^{m} \lambda_i(\phi(x_n)) \leq 
 		(m-l)\Bl \frac{1}{nc} + |\H(f(x_n))|\Br,
 	\end{equation*}
 	where $c = \inf\limits_M (\psi\circ f)$.
 	Consequently,
 	\begin{equation*}
 		\begin{aligned}
 			|A(\phi(x_n))|^2 &= \sum_{i=1}^{l} 
 			\lambda_i(\phi(x_n))^2+\sum_{i=l+1}^{m} 
 			\lambda_i(\phi(x_n))^2\\
 			&\leq \Bl\sum_{i=1}^{l} 
 			|\lambda_i\phi(x_n)|\Br^2+\sum_{i=l+1}^{m} 
 			\lambda_i(\phi(x_n))^2\\
 			&\leq \Bl(m-l)^2+(m-l)\Br\Bl \frac{1}{nc} + |\H(f(x_n))|\Br^2\\
 			&\leq m(m-1)\Bl \frac{1}{nc} + |\H(f(x_n))|\Br^2
 			\leq m^2\Bl \frac{1}{nc} + |\H(f(x_n))|\Br^2.
 		\end{aligned}
 	\end{equation*}
 	From this inequality we obtain
 	\begin{equation*}
 		\inf_{\Gamma_f} |A| \leq m \cdot |\H(\inf f).|
 	\end{equation*}
 	
 	If we use the sequence $(y_n)$ of Proposition \ref{Prop-PrinCurvRefin} 
 	instead the sequence $(x_n)$, we obtain the following inequality
	 \begin{equation*}
		\inf_{\Gamma_f} |A| \leq m \cdot |\H(\sup f)|,
	\end{equation*}	
	and the item (ii) follows. 
\end{proof}

%\begin{proof}	
%	$(i)$ As in the Theorem \ref{Teo-EntGrapSecForm} the inequalities 
%(\ref{eq17}), (\ref{eq020}) and (\ref{eq98}) holds. 
%	By Lemma \ref{Lem-PrinCurvRefin}, there exists $x_n\in M$ such that  
%	\begin{equation*}
%	\lambda_i\left( \phi(x_n)\right)\leq 
%	\frac{1}{n\psi(f(x_n))} + |\H(f(x_n))|, \hspace{0.3cm} 
%	i=1,\ldots,m, 
%	\end{equation*}
%	and satisfies $x_n \to \inf f$. 
%	Applying in the \Eqref{Eq-SquareSecForm} 
%%	we have 
%%	Using that $\frac{(m-l)(2l-1)}{l-1}$ is a decreasing function on $l$ 
%	and taking $n\to +\infty$, we have  
%	that 
%	\begin{equation*}
%	\inf_{\Gamma_f}|A| \leq 3(m-2)|\H(\inf f)|, 
%	\end{equation*}
%	and the result follows. 
%	
%%	$(ii)$ This case follows exactly the same lines of \cite[Theorem 
%%$1.7$]{FONT}, where we use the estimates given by Lemma 
%%\ref{Lem-PrinCurvRefin}. 
%\end{proof}	

%The assumption about $\sigma$ and $f$ are necessary at Lemma
%\ref*{Lem-PrinCurvRefin} and Theorems \ref{Teo-EntGrapSecForm}, 
%\ref*{Teo-EntGrapSecForm2}.

%%%%%%%%%%%%%%%%%%%%%%%%%%%%%%%%%
\section{Curvature estimates for graphs not necessarily contained in a 
slab} \label{Sec-Est-Curv--Not-Cont-Slab}
%%%%%%%%%%%%%%%%%%%%%%%%%%%%%%%%%

In this section, we will make curvature estimates for graphs $\Gamma_f 
\subset M_\psi \times I$ in the case where $f$ is unbounded. We will divide 
the discussion in two cases, depending on whether the warped function $\psi$ 
has reciprocal with finite area or not.

\subsection{Warped functions whose reciprocal has finite area}

The bound hypothesis on $f$ is not always achieve in a general situation. In 
this section we remove this condition. Our  methods presented in the last 
section are based in the Omori-Yau maximum principle that is used in the 
Riemannian product after we apply the function $\sigma$ (see \Eqref{eq028}) on 
$f$. Therefore it is natural to ask about the boundedness of $\sigma$ instead 
$f$. By the definition of $\sigma $, it is clear his relation with the warped 
function $\psi$. However, it is not easy to find suitable conditions on 
$\psi$ to obtain boundedness on $\sigma$. There 
are classical 
examples that satisfy the boundedness 
on $\sigma$ as the Hyperbolic spaces (see Section \ref{Sec-Applications}). 
Let us present some examples about the boundedness of the function $\sigma$.

\begin{example}\label{Ex-Sigmabounded}
	Consider $\psi(t) = \cosh t$, $\psi:\R \to \R$. Taking the constant 
	$\sigma_0= \pi/2$ one 
	has 
	\begin{equation*}
		\begin{aligned}
			\sigma(t) = \pi/2  - \int_{0}^{t}\frac{1}{\cosh u} du
			= \pi-2\arctan(e^t), 
		\end{aligned}
	\end{equation*}
	which is bounded on $\mathbb{R}$. 
\end{example}
\begin{example} 
	Consider $\psi(t) = e^t$, $\psi: \R \to \R$. Taking  $\sigma_0=1$ we 
	obtain the function  
	$\sigma(t) = 
	e^{-t}$ that is only bounded below.  
\end{example}

\begin{example}
	Let $\psi: (0,\infty)\to \R$ be the identity function $\psi(t)=t$. Then, 
taking $\sigma_0=0$, we obtain that $\sigma(t) = \ln t^{-1}$, thus $\sigma$ 
is an unbounded function.	
\end{example}

Motivated by these examples, we define.
\begin{dfn}
	We say that a function $\rho:I\subset \R \to (0,\infty)$ has 
	\nomepsi\ when $1/\rho \in L^{1}(I)$, \ie
	\begin{equation*}
		\int_I \frac{1}{\rho(x)} dx <\infty.
	\end{equation*}
\end{dfn}

If $\psi$ has \nomepsi, then the 
function $\hat f = \sigma \circ f$ is a bounded function for any function $f$ 
on $M$. 
This simple observation allows us obtain curvature estimates for any graph 
$\Gamma_f \subset M_\psi \times I$, similar to the estimates in the previous 
section. Below we will only state the results, once the proofs are similar.

The Proposition \ref{Prop-PrinCurvRefin} has an analogue when $\psi$ has 
\nomepsi. More 
precisely, if $\psi$ has \nomepsi, then there are 
		sequences $(x_n), (y_n)\subset M$  such that the 
		principal curvatures 
		$\lambda_i$ of the graph of $f$ satisfies 
		\begin{equation}\label{Eq-PrinCurvEstpositive-psi}
			\lambda_i(\phi(x_n)) < \frac{1}{n\psi(f(x_n))} 
			+|\mathcal{H}(f(x_n))|, 
			\;\; \forall i=1,...,m,    
		\end{equation}
		and
		\begin{equation}\label{Eq-Princ.CurvEstnegative-psi}
			-\lambda_i(\phi(y_n)) < \frac{1}{n\psi(f(y_n))} 
			+|\mathcal{H}(f(y_n))|, 
			\;\; \forall i=1,...,m,   
		\end{equation}
		for $n$ sufficiently large. As consequence, we obtain the following 
		estimate for the mean curvature $H$ 
of the graph $\Gamma_f$.

\begin{theorem}\label{Theo-MeanCurvOmoriYau-sigma}
	Let $M$ be a complete Riemannian manifold with sectional curvature 
	boun\-ded from below. Assume that $\inf (\psi \circ f)>0$. If $\psi$ has 
	\nomepsi, then
	\begin{equation}\label{Eq-MeanCurvSigBounded}
		\inf_{\Gamma_f} |H| \leq \sup_{f(M)} |\H|.
	\end{equation}
\end{theorem}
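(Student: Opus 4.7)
\emph{Proof proposal.} The plan is to run the proof of Theorem~\ref{Theo-MeanCurvOmoriYau-f} essentially verbatim, after observing that the \nomepsi\ condition on $\psi$ makes the auxiliary function $\widehat{f}=\sigma\circ f$ automatically bounded on $M$, regardless of whether $f$ itself is bounded.

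I would begin by noting that $1/\psi\in L^{1}(I)$ forces $\sigma$ to be bounded on $I$, by the very definition \eqref{eq028}, so $\widehat{f}$ is bounded on $M$ and the graph $\Gamma_{\widehat{f}}\subset M\times\mathbb{R}$ lies in a slab. The variant of Proposition~\ref{Prop-PrinCurvRefin} stated around \eqref{Eq-PrinCurvEstpositive-psi} and \eqref{Eq-Princ.CurvEstnegative-psi} then produces a sequence $(x_n)\subset M$ along which $\widehat{f}(x_n)\to\sup\widehat{f}$, $\|\nabla\widehat{f}(x_n)\|\to 0$, and (combining with the dual sequence $(y_n)$ and the continuity of $\widehat{H}$ on the connected graph $\Gamma_{\widehat{f}}$) $|\widehat{H}(\widehat{\phi}(x_n))|\to\inf_{\Gamma_{\widehat{f}}}|\widehat{H}|=0$, exactly as in the proof of Theorem~\ref{Theo-MeanCurvOmoriYau-f}.

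Next I would feed this sequence into the scaling relation \eqref{eq013}, which yields
\begin{equation*}
H(\phi(x_n))=\frac{\widehat{H}(\widehat{\phi}(x_n))}{\psi(f(x_n))}+\frac{\psi(f(x_n))}{\cW(x_n)}\,\H(f(x_n)).
\end{equation*}
The hypothesis $\inf_{M}(\psi\circ f)>0$ ensures that the first summand vanishes in the limit, while $\|\nabla\widehat{f}(x_n)\|\to 0$ together with the identity $\cW/\psi(f)=\sqrt{1+\|\nabla\widehat{f}\|^{2}}$ force $\psi(f(x_n))/\cW(x_n)\to 1$. Taking absolute values and using $|\H(f(x_n))|\leq\sup_{f(M)}|\H|$ for every $n$, I obtain
\begin{equation*}
\inf_{\Gamma_f}|H|\leq\liminf_{n\to\infty}|H(\phi(x_n))|\leq\sup_{f(M)}|\H|,
\end{equation*}
which is the desired estimate.

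The delicate point is the claim $|\widehat{H}(\widehat{\phi}(x_n))|\to 0$: the Omori--Yau principle provides only a one-sided Hessian bound, which controls $\widehat{H}(\widehat{\phi}(x_n))$ from above by $m/n$ but leaves it unbounded from below along $(x_n)$. This is handled by also invoking the dual sequence $(y_n)$ from \eqref{Eq-Princ.CurvEstnegative-psi}, where the same reasoning yields $\widehat{H}\geq -m/n$, and then using continuity on the connected graph to deduce $\inf|\widehat{H}|=0$. The positivity hypothesis $\inf_{M}(\psi\circ f)>0$ is also crucial, since otherwise the denominator $\psi(f(x_n))$ could decay to $0$ faster than $\widehat{H}$, preventing the first summand above from vanishing in the limit.
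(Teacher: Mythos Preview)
Your proposal is correct and follows exactly the approach the paper indicates: the paper does not spell out a proof of Theorem~\ref{Theo-MeanCurvOmoriYau-sigma} but explicitly states that it is obtained by rerunning the proof of Theorem~\ref{Theo-MeanCurvOmoriYau-f} once one observes that the \nomepsi\ hypothesis makes $\widehat{f}=\sigma\circ f$ bounded, which is precisely what you do. Your final paragraph correctly isolates the one point where care is needed (the one-sided Hessian bound from Omori--Yau) and resolves it the same way the paper implicitly does in the proof of Theorem~\ref{Theo-MeanCurvOmoriYau-f}, and you also correctly flag why the extra hypothesis $\inf(\psi\circ f)>0$ is required here but not in the slab case.
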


Using Lemma \ref{ecpprop3} we can estimate the scalar curvature $R$ of a 
graph $\Gamma_f$ non-necessarily contained in a slab.

\begin{theorem}\label{Teo-ScalCurv-sigma}
	Let $M$ be a complete Riemannian manifold with sectional curvature 
	boun\-ded below and assume there are $\alpha\leq 0$ and $\beta\geq 0$ 
	satisfying inequality \eqref{eq08} over $M$. If $\inf (\psi\circ f)>0$ 
	and $\psi$ has \nomepsi, then   
		\begin{equation}\label{Eq-ScalCurvEntGrap}
			\begin{aligned}
				\inf_{\Gamma_f} |R| &\leq \beta -\alpha+ 
				4\sup_{f(M)}|\mathcal{H}|^2  
				+  \sup_{f(M)}|\cK^{\perp}|.
			\end{aligned}
		\end{equation}
\end{theorem}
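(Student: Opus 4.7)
The plan is to follow the scheme of the proof of Theorem \ref{Teo-ScalCurv}, with the slab hypothesis replaced by the \nomepsi\ assumption on $\psi$. Since $1/\psi \in L^1(I)$, the reduced function $\widehat{f} = \sigma \circ f$ defined via \eqref{eq028} is automatically bounded on $M$, so the Omori--Yau maximum principle applied in the Riemannian product $M\times \R$ still produces sequences $(x_n), (y_n)\subset M$ satisfying the principal curvature estimates \eqref{Eq-PrinCurvEstpositive-psi} and \eqref{Eq-Princ.CurvEstnegative-psi}; the assumption $\inf(\psi\circ f)>0$ ensures the correction $1/(n\psi(f(x_n)))$ vanishes in the limit. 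The essential difference with the slab case is that we no longer have $f(x_n)\to\inf f$, so pointwise values like $\H(\inf f)$ and $\cK^{\perp}(\inf f)$ appearing in Theorem \ref{Teo-ScalCurv} must be replaced by the uniform bounds $\sup_{f(M)}|\H|$ and $\sup_{f(M)}|\cK^{\perp}|$.

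Assuming without loss of generality that $R$ never vanishes on $\Gamma_f$, Lemma \ref{ecpprop3} splits the argument into three cases, exactly as in the proof of Theorem \ref{Teo-ScalCurv}: \textbf{(i)} $R>0$ throughout $\Gamma_f$; \textbf{(ii)} $R<0$ and there exists $p_0\in M$ with $\beta + H_2(\phi(p_0)) + \cK^{\perp}(f(p_0))\geq 0$; \textbf{(iii)} $R<0$ and $\beta + H_2 + \cK^{\perp}<0$ everywhere. In case (i), using the Cauchy--Schwarz inequality $H_2\leq H^2$ together with the upper estimate in \eqref{eq16} and the mean curvature identity \eqref{eq013} evaluated along $(x_n)$ yields, after letting $n\to\infty$,
\[
\inf_{\Gamma_f}|R| \;\leq\; \beta + \sup_{f(M)}|\H|^2 + \sup_{f(M)}|\cK^{\perp}|.
\]
Case (ii) is immediate from the lower estimate in \eqref{eq16}: evaluating at $p_0$ gives $\alpha-\beta \leq \alpha + H_2(\phi(p_0)) + \cK^{\perp}(f(p_0)) \leq R(\phi(p_0))<0$, whence $\inf_{\Gamma_f}|R|\leq \beta-\alpha$.

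The main obstacle is case (iii), which is what produces the factor $4\sup|\H|^2$ in \eqref{Eq-ScalCurvEntGrap}. Since $H_2 < -\beta - \cK^{\perp} \leq 0$ everywhere, there exist principal curvatures of both signs at every $\phi(x_n)$; order them $\lambda_1\leq\cdots\leq\lambda_l<0\leq\lambda_{l+1}\leq\cdots\leq\lambda_m$ with $1\leq l\leq m-1$, keep only the mixed-sign products, and mimic the rearrangement leading to \eqref{Eq-InEscCurvProfTheo}. The positive curvatures $\lambda_{l+1},\ldots,\lambda_m$ are controlled by \eqref{Eq-PrinCurvEstpositive-psi} via the bound $1/(nc) + |\H(f(x_n))|$ with $c=\inf_M(\psi\circ f)>0$, and $|H(\phi(x_n))|$ is bounded above by the same quantity. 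Taking $n\to\infty$ and replacing $|\H(f(x_n))|$ by $\sup_{f(M)}|\H|$ and $|\cK^{\perp}(f(x_n))|$ by $\sup_{f(M)}|\cK^{\perp}|$ yields
\[
\inf_{\Gamma_f}|R| \;\leq\; -\alpha + 4\sup_{f(M)}|\H|^2 + \sup_{f(M)}|\cK^{\perp}|,
\]
while the subcase where a subsequence has $H_2\geq 0$ gives only the weaker bound $-\alpha + \sup_{f(M)}|\cK^{\perp}|$. Taking the maximum over all three cases produces \eqref{Eq-ScalCurvEntGrap}; repeating the argument with $(y_n)$ and $\overline{f}=\cM\circ f$ in place of $(x_n)$ and $f$ gives the same bound, and the theorem follows. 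The only real technical point is verifying that the book-keeping of constants in case (iii) survives the loss of $f(x_n)\to\inf f$: uniform suprema replace pointwise limits, and the $\sigma$-finite-area assumption is precisely what justifies the use of Omori--Yau in the first place.
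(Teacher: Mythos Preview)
Your proposal is correct and takes essentially the same approach as the paper: the paper explicitly states that the proof of Theorem~\ref{Teo-ScalCurv-sigma} is ``similar'' to that of Theorem~\ref{Teo-ScalCurv}, with the analogue of Proposition~\ref{Prop-PrinCurvRefin} (namely \eqref{Eq-PrinCurvEstpositive-psi}--\eqref{Eq-Princ.CurvEstnegative-psi}) replacing the original one, and you carry this out faithfully. Your identification of the one genuine modification---replacing pointwise limits $\H(\inf f)$, $\cK^{\perp}(\inf f)$ by uniform suprema $\sup_{f(M)}|\H|$, $\sup_{f(M)}|\cK^{\perp}|$ since $f(x_n)\to\inf f$ is no longer available---is exactly what the paper intends; the final pass with $(y_n)$ and $\overline{f}$ is harmless but redundant here, since the bound \eqref{Eq-ScalCurvEntGrap} is already symmetric.
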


\begin{remark} \label{Rem-3}
	Even in the case that $\psi$ does not have \nomepsi, we can obtain 
	estimates for the mean curvature $H$ and the scalar curvature $R$ of a 
	graph. 
	If the function $\sigma$ (see \eqref{eq028}) is only 
	bounded from above (resp. below) we can obtain the 
	same estimate of \eqref{Eq-MeanCurvSigBounded} under the assumption that 
	the mean curvature of $\Gamma_f$ is non-negative (resp. non-positive).

	Similarly, if $\psi$ does not have \nomepsi, but the function $\sigma$ 
	is bounded from above or bounded from below, it is possible to prove the 
	following estimate 
	\begin{equation*}\label{Eq-ScalCurvEntGrap2}
		\begin{aligned}
			\inf_{\Gamma_f} |R| &\leq \beta -\alpha+ 
			2\sup_{f(M)}|\mathcal{H}^2|  
			+  \sup_{f(M)}|\cK^{\perp}|,
		\end{aligned}
	\end{equation*}
	under the assumption that 
	the mean curvature of $\Gamma_f$ is non-negative (resp. non-positive). 
\end{remark}

We now present estimates for the norm of the shape operator of a graph 
of an unbounded function. In 
Theorem \ref{Teo-EntGrapSecForm} below we relax the condition 
that $\psi$ has \nomepsi\ and assume only partial boundedness on $\sigma$ 
(see 
\eqref{eq028}).

\begin{theorem}\label{Teo-EntGrapSecForm}
	Let $M$ be a complete Riemannian manifold with sectional curvature 
	boun\-ded below and $\Gamma_f \subset M_\psi \times I$ a graph. Assume 
	that $\inf (\psi\circ f) >0$ and
	the function $\sigma$ is bounded from below or above.
	\begin{itemize}
		\item[(i)] If $\dim M = m \geq 3$ and the inequality \Eqref{eq021}
		holds on $M$, then the norm of the shape operator $A$ of $\Gamma_f$ 
		satisfies
		\begin{equation*}
			\inf_{\Gamma_f} |A| \leq 
			3(m-2) \cdot \sup_{f(M)} |\mathcal{H}|.
		\end{equation*}  
		
		\item[(ii)]	If the mean curvature $H$ of the graph does not change 
		sign, then the norm of the shape operator $A$ of $\Gamma_f$ satisfies 
		\begin{equation*}
			\inf_{\Gamma_f} |A| \leq m \cdot \sup_{f(M)}|\H|.
		\end{equation*}
	\end{itemize} 
\end{theorem}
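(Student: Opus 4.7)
The strategy is to mimic the proof of Theorem \ref{Teo-EntGrapSecForm2}, replacing the two-sided sequences of Proposition \ref{Prop-PrinCurvRefin} by the one-sided analog summarized in \eqref{Eq-PrinCurvEstpositive-psi}--\eqref{Eq-Princ.CurvEstnegative-psi}. Since $\sigma$ is bounded above or below (the case $\sigma$ bounded below is handled by the orientation-reversing isometry $\varphi$ from the proof of Proposition \ref{Prop-PrinCurvRefin}(ii), which converts it to the case of $\bar\sigma$ bounded above), the function $\hat f=\sigma\circ f$ is bounded above or below on $M$ regardless of whether $f$ itself is bounded. Assuming $\sigma$ bounded above, the Omori--Yau maximum principle applied to $\hat f$ on $M\times\mathbb{R}$ produces a sequence $(x_n)\subset M$ with $\hat f(x_n)\to\sup\hat f$, $\|\nabla\hat f(x_n)\|\to 0$, and $\mathrm{Hess}\,\hat f_{x_n}(v,v)<\|v\|^2/n$ for $n$ large. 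Translating this back to $\Gamma_f$ via the conformal map $\tau$ and relation \eqref{Eq-PrinCurvWarpProd} yields the principal curvature estimate \eqref{Eq-PrinCurvEstpositive-psi}, where $c:=\inf_M(\psi\circ f)>0$ by hypothesis.

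For part (i), the argument of Theorem \ref{Teo-EntGrapSecForm2}(i) transfers essentially unchanged. The hypothesis \eqref{eq021} combined with \eqref{ecp3} and \eqref{Eq-CurvCurvSecGraph} forces $\lambda_i(mH-\lambda_i)<0$ at every point of $\Gamma_f$, so the number $l$ of negative principal curvatures satisfies $1\leq l\leq m-1$. Since $m\geq 3$, at each point either $l\geq 2$ or $m-l\geq 2$, and a direct computation shows that the $\bar\sigma\circ\bar f$ associated to the reversal $\bar f=\mathcal{M}\circ f$ differs from $-\hat f$ by a constant, so the partial-boundedness hypothesis is preserved under reversal; we may therefore assume $l\geq 2$. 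The derivation of \eqref{eq98}--\eqref{Eq-SquareSecForm} then goes through verbatim along the sequence $(x_n)$, yielding
\begin{equation*}
|A|(\phi(x_n)) < \frac{(m-l)(2l-1)}{l-1}\left(\frac{1}{nc} + |\mathcal{H}(f(x_n))|\right) \leq 3(m-2)\left(\frac{1}{nc} + \sup_{f(M)}|\mathcal{H}|\right),
\end{equation*}
where the first inequality uses \eqref{Eq-PrinCurvEstpositive-psi} and the monotonicity of $(m-l)(2l-1)/(l-1)$ in $l$, and the second replaces $|\mathcal{H}(f(x_n))|$ by its supremum since $f(x_n)$ need not converge in $I$. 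Letting $n\to\infty$ gives the stated bound.

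For part (ii), we assume without loss of generality that $H\geq 0$ and follow the proof of Theorem \ref{Teo-EntGrapSecForm2}(ii) along $(x_n)$: if all principal curvatures at $\phi(x_n)$ are non-negative, then $|A|^2(\phi(x_n))\leq m(1/(nc)+|\mathcal{H}(f(x_n))|)^2$; otherwise the inequality $\sum_{i\leq l}|\lambda_i|\leq\sum_{i>l}\lambda_i$ forced by $H\geq 0$, together with \eqref{Eq-PrinCurvEstpositive-psi}, yields $|A|^2(\phi(x_n))\leq m^2(1/(nc)+|\mathcal{H}(f(x_n))|)^2$. Bounding $|\mathcal{H}(f(x_n))|\leq\sup_{f(M)}|\mathcal{H}|$ and passing to the limit produces the claim. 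The main conceptual departure from the slab case is that $f(x_n)$ need not converge in $I$, which is precisely why the estimates involve $\sup_{f(M)}|\mathcal{H}|$ rather than values of $\mathcal{H}$ at specific endpoints; the main technical point to verify is that the reversal argument in part (i) remains compatible with only partial boundedness of $\sigma$, which follows from the constant-difference identity $\bar\sigma\circ\bar f = C-\hat f$.
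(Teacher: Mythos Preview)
Your approach is exactly the one the paper intends (the paper omits the proof, saying only that it is ``similar'' to that of Theorem~\ref{Teo-EntGrapSecForm2}), and your treatment of part~(ii) is correct.

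There is, however, a genuine gap in your reversal argument for part~(i). You correctly verify that $\bar\sigma\circ\bar f = C - \hat f$, so the partial-boundedness hypothesis is preserved under reversal. But precisely \emph{because} of this identity, reversal yields no new information: if $\hat f$ is bounded above, then $\widehat{\bar f}=C-\hat f$ is bounded \emph{below}, and the Omori--Yau sequence for $\widehat{\bar f}$ near its infimum is a sequence along which $\hat f$ approaches its supremum with $\mathrm{Hess}\,\hat f<1/n$---the same data as your original $(x_n)$. Translating back, you still have only \emph{upper} bounds $\lambda_i<B_n$ along that sequence, and the chain \eqref{eq98}--\eqref{Eq-SquareSecForm} still requires $l\geq 2$ to avoid dividing by $l-1=0$. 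Thus the case $l(x_n)=1$ is not covered by your argument.

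This is the key difference from the slab situation of Theorem~\ref{Teo-EntGrapSecForm2}: there $f$ is bounded on both sides, so Proposition~\ref{Prop-PrinCurvRefin} furnishes two \emph{genuinely distinct} sequences $(x_n)$ and $(y_n)$, one giving upper bounds on $\lambda_i$ and the other on $-\lambda_i$; the reversal there effectively switches between them, so one of the two always handles the bad case. With only one-sided boundedness of $\sigma$ you have access to only one of these two bound types, and the constant-difference identity shows that reversal cannot manufacture the other.
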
 

The following example shows that the assumption in the last theorem that the  
function $\sigma$ is bounded from below or above can not be dropped.
\begin{example}
	Fix $a\in \R$ and consider the hyperbolic model space given by the upper 
	half-space 
	$\mathbb{H}^2 = \{(x,y)\in\mathbb{R}^2: y>0\}$ with the metric 
	$\displaystyle ({1}/{y^2})(dx^2+dy^2)$. Define the function 
	$f:\mathbb{H}^2\to \R$, 
	by 
	$f(x,y)=a \ln y$, with $a>0$. The function $\sigma(t) =t$ is not bounded 
	and 
	$\H(t) = 0$ 
	for all $t$. The graph of 
	$f$ has principal curvatures $\displaystyle
	\lambda_1 = \frac{-a}{\sqrt{1+a^2}}$, 
	$\lambda_2 = 0$ 
	calculated with respect to $\eta$. Moreover, $\displaystyle |A| = 
	\frac{|a|}{\sqrt{1+a^2}}$ 
	\big(see \cite[Example $1$]{AHE} and \cite[Example $10$]{HEU}\big).  This 
	shows 
	that the mean curvature of the graph $H$ is constant and negative, the 
	mean curvature of the slices are trivial and $|A|>0$.
\end{example}

%%------Local estimates below------

\subsection{Warped functions whose reciprocal has not finite area}

In Remarks \ref{Rem-1}, \ref{Rem-2} and \ref{Rem-3} we observed that we can 
obtain curvature estimates with partial boundedness hypotheses. If we are in 
the position that $f$ is unbounded, not even partially 
bounded, and 
$\psi$ has not \nomepsi, $\sigma$ is not even partially bounded, we still can 
obtain estimates for the curvature elements using local estimates that we 
describe bellow. The idea behind the proofs of the next results is to use the
equations 
\eqref{Eq-PrinCurvWarpProd}, \eqref{eq013} to compare the 
curvature of a graph in a 
warped product space with the curvature of a graph in a Riemannian product 
and then to apply the \cite[Proposition 
3.3]{FF}. We will omit the proofs here.

For every closed metric ball with 
radius $r$, $\overline{B}_r\subset M$,  
there are $p, q\in \overline{B}_r$ such that
\begin{equation}\label{eq023}
	\begin{aligned}
		-\lambda_i(\phi(q)),\;\lambda_i(\phi(p))\leq 
		\frac{\nu_c(r)}{\inf_{\overline{B}_r}(\psi\circ 
			f)} + \sup_{f( \overline{B}_r)}|\mathcal{H}|,
		%	-\lambda_i(\phi(q))\leq 
		%\frac{\nu_c(r)}{\inf_{\overline{B}_r}(\psi\circ 
		%		f)}+ \sup_{f( \overline{B}_r)}|\H| 
	\end{aligned}
\end{equation}
for all $i=1\ldots,m$, where  
$c = \inf_{\overline{B}_r} K_M$, $K_M$ denotes the sectional curvature 
of $M$ and
\begin{equation}\label{eq07}
	\nu_s(t): =\left\{
	\begin{array}{ll}
		%\sqrt{s} \cot \left( t\; \sqrt{s} \right), & s>0, \; 
		%0<t<\frac{\pi}{\sqrt{s}},\\
		t^{-1}, &s \geq 0, \; t>0,\\
		\sqrt{-s} \coth\left( t \; \sqrt{-s}\right),  & s<0,\; t>0.
	\end{array}
	\right.
\end{equation}
With these estimates and \cite[Theorem 1.1]{FF} we obtain
\begin{equation*}
	\inf_{\phi(\overline{B}_r)}|H| \leq 
	\frac{1}{m \inf_{\overline{B}_r} 
		(\psi\circ 
		f)} \left( (m-1)\nu_d(r) + \frac{1}{r}\right) + \sup_{f( 
		\overline{B}_r)}|\mathcal{H}|, 
\end{equation*}
where $d = \inf_{\overline{B}_r}\textnormal{Ric}_M$ and 
$\textnormal{Ric}_M$ is the 
Ricci curvature of $M$.

When the Ricci curvature of $M$ is bounded from below we can 
take the limit for $r\to \infty$ in the last estimative to obtain the 
following 
result. 
\begin{theorem}
	Let $M^m$ be a complete Riemannian manifold with Ricci curvature 
	boun\-ded 
	from  below by a constant $d$. If
	$\displaystyle\inf(\psi\circ f)>0$, then the mean curvature $H$ of the 
	graph $\Gamma_f\subset M_\psi \times \R$ satisfies
	\begin{equation}\label{Eq-infHlimitlocal}
		\inf_{\Gamma_f} |H| \leq \frac{1}{\inf_M(\psi\circ f)} 
		\frac{(m-1)}{m}\sqrt{-d} + 
		\sup_{f\left(M\right)}|\mathcal{H}|.
		%\frac{\left| \psi'\right|}{\psi},
	\end{equation}
\end{theorem}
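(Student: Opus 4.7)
The plan is to obtain \eqref{Eq-infHlimitlocal} as the $r\to\infty$ limit of the local mean-curvature estimate displayed just above the theorem, applied on an exhaustion of $M$ by metric balls. First I would fix a base point $x_{0}\in M$ and consider the family of closed geodesic balls $\overline{B}_{r}=\overline{B}_{r}(x_{0})$. Since $M$ is complete, these balls exhaust $M$, and under the canonical parametrization $\phi:M\to\Gamma_{f}$ their images $\phi(\overline{B}_{r})$ form an increasing exhaustion of $\Gamma_{f}$; consequently
\begin{equation*}
\inf_{\Gamma_{f}}|H|\;=\;\lim_{r\to\infty}\inf_{\phi(\overline{B}_{r})}|H|,
\end{equation*}
with the right-hand side being monotone nonincreasing in $r$.

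The next step is to apply the local estimate to each $\overline{B}_{r}$ and replace each $r$-dependent constant on the right-hand side by a global one. The hypothesis $\mathrm{Ric}_{M}\geq d$ on all of $M$ allows this same constant $d$ to serve as a lower bound for $\inf_{\overline{B}_{r}}\mathrm{Ric}_{M}$, and the definition \eqref{eq07} of $\nu_{s}$ is monotone in $s$, so the estimate remains valid with $\nu_{d}(r)$ in place of $\nu_{\inf \mathrm{Ric}}(r)$. The assumption $\inf_{M}(\psi\circ f)>0$ yields $\inf_{\overline{B}_{r}}(\psi\circ f)\geq \inf_{M}(\psi\circ f)>0$, and trivially $\sup_{f(\overline{B}_{r})}|\H|\leq \sup_{f(M)}|\H|$. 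Thus, for every $r>0$,
\begin{equation*}
\inf_{\phi(\overline{B}_{r})}|H|\;\leq\;\frac{1}{m\,\inf_{M}(\psi\circ f)}\Bl(m-1)\nu_{d}(r)+\frac{1}{r}\Br+\sup_{f(M)}|\H|.
\end{equation*}

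To finish, I would let $r\to\infty$. From \eqref{eq07}, when $d<0$ one has $\nu_{d}(r)=\sqrt{-d}\coth(r\sqrt{-d})\to\sqrt{-d}$, while when $d\geq 0$ one has $\nu_{d}(r)=1/r\to 0$ (and the bound with $\sqrt{-d}$ read as $0$ is even stronger); in all cases the term $1/r$ also vanishes. Combining this with the monotone-limit identity above produces exactly \eqref{Eq-infHlimitlocal}.

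The argument is essentially a bookkeeping passage to the limit, so I do not anticipate a serious obstacle. The only delicate point is to make sure that the constants appearing in the local estimate depend on $r$ only through $d$ and $\inf_{\overline{B}_{r}}(\psi\circ f)$ and $\sup_{f(\overline{B}_{r})}|\H|$, so that each of them can be replaced by its global counterpart without losing the inequality; but this follows directly from the way that estimate was derived via \eqref{eq023} and \cite[Theorem~1.1]{FF}.
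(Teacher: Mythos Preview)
Your proposal is correct and follows exactly the approach the paper indicates: the paper simply says ``take the limit for $r\to\infty$ in the last estimative'' of the local inequality, and you have carefully filled in the bookkeeping (exhaustion by balls, monotonicity of $\nu_s$ in $s$, the asymptotics $\nu_d(r)\to\sqrt{-d}$ and $1/r\to 0$) that makes that passage rigorous. There is nothing substantively different between your argument and the paper's.
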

This theorem is an extension of \cite[Corollary 1.2]{FF} to
entire graphs in warped product spaces.
%	where $d=\inf \textnormal{Ric}_M$.

We conclude the section with comments about local estimates for the scalar 
curvature and the norm of the shape operator of a graph (compare 
with \cite[Theorems 1.4, 1.6, 1.8]{FF}). We use local 
estimate for the 
principal curvatures given by \eqref{eq023} and applying the same proof's 
strategies that we used in  Section \ref{Sec-Est-Curv-Cont-Slab}, we obtain 
the following local estimates.

\begin{equation*}
	\begin{aligned}
		\inf_{\phi(\overline{B}_r)} |R| &< 
		2\left(\sup_{\phi(\overline{B}_R)} |H| 
		+ 
		\frac{\nu_d(r)}{\inf_{\overline{B}_r}(\psi\circ f)}
		+ \sup_{f(\overline{B}_r)}|\mathcal{H}|\right)
		\left(\frac{\nu_d(r)}{\inf_{\overline{B}_r}(\psi\circ f)} + 
		\sup_{\phi(\overline{B}_r)}|\mathcal{H}| \right) \\
		%\frac{|\psi'|}{\psi}\right) \\
		&\qquad +\beta -\alpha+ \sup_{f(\overline{B}_r)}|\cK^{\perp}|,
	\end{aligned}
\end{equation*}
where $d=\inf_{\overline{B}_r} \Ric_M$, $\alpha\leq 0$ and 
$\beta\geq 0$ are constants such that \Eqref{eq08} holds over 
$\overline{B}_r$. 

For the norm of the shape operator we have two estimates. 
First, if $m\geq 3$ and  \Eqref{eq021} holds over 
$\overline{B}_r$
then 
\begin{equation*}
	\inf_{\phi(\overline{B}_r)} |A| \leq 
	3(m-2)\left(\frac{\nu_c(r)}{\inf_{\overline{B}_r}(\psi\circ f)}+ 
	\sup_{f(\overline{B}_r)} |\mathcal{H}|\right), 
\end{equation*}
where $K_M\geq c$ on ${\overline{B}_r}$. For the second estimate,
assume that $H$ does not change sign thus the norm of second fundamental 
form of $\Gamma_f$ satisfies 
\begin{equation*}
	\inf_{\phi(\overline{B}_r)} |A| \leq m\left( 
	\frac{\nu_c(r)}{\inf_{\overline{B}_r} (\psi\circ 
		f)} + 		
	\sup_{f(\overline{B}_r)}|\H|\right).
	%\frac{|\psi'|}{\psi}\right), 
\end{equation*}
We observe that these estimates generalizes Theorems 1.2, 1.6 and 1.7 of 
\cite{FONT}, respectively.
If we 
have 
global hypothesis, then we can take the limit $r\to \infty$ on the 
inequalities 
above to obtain another global estimates for the scalar curvature and the 
norm of the shape operator.

%%%%%%%%%%%%%%%%%%%%%%%%%%%%%%%%%
\section{Pseudo-Hyperbolic Spaces and Space Forms} \label{Sec-Applications}
%%%%%%%%%%%%%%%%%%%%%%%%%%%%%%%%%

In this section we collect some consequences of the previous results.

\subsection{Pseudo-Hyperbolic spaces}
 
When $M^m$ is a 
complete Riemannian manifold and the warped function $\psi$ is either the 
exponential or 
the hyperbolic cosine, following the 
terminology introduced by Tashiro \cite[Section 3]{TASH}, we call the 
corresponding 
warped 
product space a {\it pseudo-hyperbolic space}. The geometric quantities of 
these spaces 
are presented in the table below.

\begin{table}[!ht]
\centering
\begin{tabular}{|c|c|c|c|c|}
	\hline  Space &
	 $\H(t)$ & $\cK(p,t)$ & $\cK^{\perp}(p,t)$ & $\sigma(t)$ \\ 
	\hline &&&&\\ 
	$M^m_{\cosh t} \times \R$ & $\tanh (t)$ & 
	$\displaystyle\frac{K_M}{\cosh^2(t)}$ & $-1$ 
	& $\pi - 2\arctan(e^t)$\\&&&& \\
	\hline &&&&\\
	$M^m_{e^t} \times \R$ & $1$ & $\displaystyle\frac{K_M}{e^{2t}}$ & $-1$ & 
	$e^{-t}$\\&&&&\\
	\hline 	
\end{tabular} 
	\caption{Curvature elements of the slices and the function 
	$\sigma$ on the 
	pseudo-hyperbolic spaces.}\label{Table-PHSpace}
\end{table}

An important problem in warped product spaces is to classify the graphs
with constant mean curvature. A natural question is whether the slices are the
only graphs with this property. The Corollaries \ref{cor4-1} and \ref{cor4}, 
that follow from  Table \ref{Table-PHSpace} and Theorems 
\ref{Theo-MeanCurvOmoriYau-f} and \ref{Theo-MeanCurvOmoriYau-sigma},
are related to this question on the particular case of pseudo-hyperbolic 
spaces (see 
\cite[Theorem 5.2]{CdL}).

%Using the informations of the , we 
%obtain 
%the next corollary.

%\begin{cor}\label{cor4-1}
%	Let $M^m$ be a complete Riemannian manifold with  sectional 
%	curvature bounded below and $f:M\to \R$ be a smooth function. If 
%	$\Gamma_f$ is contained in a slab and its mean curvature in 
%	$M^m_{e^t}\times\mathbb{R}$ is constant, then $H=1$. 
%\end{cor}

%
%As a consequence of \Eqref{Eq-EqualityMeanCurvGraphSlice} we have the 
%following result.

\begin{cor}\label{cor4-1}
	Let $M$ be a complete 
	Riemannian 	manifold with sectional curvature bounded from below and 
	$\Gamma_f \subset M^m_{e^t}\times\mathbb{R}$ a graph.
	\begin{enumerate}
		\item If $\Gamma_f$ has constant mean curvature $H$ and is contained 
		in a 
		slab, then $H=1$. 
		
		\item If $\Gamma_f$ is bounded from below by a slice, then 
		\begin{equation*}
		\inf_{\Gamma_f} |H| \leq 1.
		\end{equation*}
		
		\item 
		If $\Gamma_f$ is contained in a slab, then its
		mean curvature $H$ is positive at some point (with orientation given 
		by \Eqref{wp4}).
	\end{enumerate}

\end{cor}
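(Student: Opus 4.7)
The plan is to extract everything from the curvature data of the pseudo-hyperbolic space $M^m_{e^t}\times\R$ recorded in Table \ref{Table-PHSpace}, namely $\mathcal{H}(t)\equiv 1$, and to feed this into Theorem \ref{Theo-MeanCurvOmoriYau-f} together with the Omori-Yau sequence produced by Proposition \ref{Prop-PrinCurvRefin}.

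For item (1), since $\Gamma_f$ sits in a slab, Theorem \ref{Theo-MeanCurvOmoriYau-f} yields
\[|\mathcal{H}(\inf f)| = 1 \in [\inf_{\Gamma_f}|H|,\ \sup_{\Gamma_f}|H|],\]
which combined with $H$ constant forces $|H|=1$. To pin down the sign, I would revisit equation \eqref{Eq-EqualityMeanCurvGraphSlice} in the proof of that theorem: along the Omori-Yau sequence the signed mean curvature satisfies $\lim_n H(\phi(x_n)) = \mathcal{H}(\inf f)=1$, and, since $H$ is constant, this forces $H=+1$.

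Item (3) is then immediate from the same observation: whenever $\Gamma_f$ lies in a slab, Proposition \ref{Prop-PrinCurvRefin}(1) and equation \eqref{Eq-EqualityMeanCurvGraphSlice} give $H(\phi(x_n))\to 1$, hence $H(\phi(x_n))>0$ for all sufficiently large $n$, so $H$ is positive somewhere on $\Gamma_f$.

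Item (2) is the only one that needs a small adaptation, because there we only know $f\ge a$ for some constant $a$, not the full slab hypothesis. The key observation is that for $\psi(t)=e^t$ the function $\sigma(t)=e^{-t}$ is strictly decreasing, so $f\ge a$ translates into $\widehat f=\sigma\circ f\le e^{-a}$, which is exactly the upper bound on $\widehat f$ needed to invoke the Omori-Yau principle on the conformally associated graph $\Gamma_{\widehat f}\subset M\times\R$; thus Proposition \ref{Prop-PrinCurvRefin}(1) still applies. Moreover $\psi(f(x_n))=e^{f(x_n)}\ge e^a>0$, so the denominators in the limit computation \eqref{Eq-EqualityMeanCurvGraphSlice} behave well, and one again obtains $H(\phi(x_n))\to \mathcal{H}(\inf f)=1$. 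Therefore $\inf_{\Gamma_f}|H|\le \lim_n|H(\phi(x_n))|=1$. The only subtle point of the whole argument is precisely this verification that Proposition \ref{Prop-PrinCurvRefin} survives the loss of the two-sided slab condition; monotonicity of $\sigma$ together with strict positivity of $\psi\circ f$ are what make it go through.
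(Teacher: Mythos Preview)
Your proposal is correct and follows the paper's approach: items (1) and (3) rest on Theorem \ref{Theo-MeanCurvOmoriYau-f} together with equation \eqref{Eq-EqualityMeanCurvGraphSlice}, exactly as the paper indicates. For item (2) the paper packages the argument as an application of Theorem \ref{Theo-MeanCurvOmoriYau-sigma} (observing that $e^t$ has reciprocal with finite area on $[\inf f,\infty)$, so $\sigma$ is bounded there), whereas you unpack this directly via Proposition \ref{Prop-PrinCurvRefin}(1) and \eqref{Eq-EqualityMeanCurvGraphSlice}; these are the same argument at different levels of abstraction.
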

 The item  (iii) above is a consequence of 
 \Eqref{Eq-EqualityMeanCurvGraphSlice}.
 
 \begin{cor}\label{cor4}
 	Let $M^m$ be a complete Riemannian manifold with  sectional 
 	curvature bounded below and $\Gamma_f \subset M^m_{\cosh 
 	t}\times\mathbb{R}$ a graph. 
 	\begin{enumerate}
 		\item The mean curvature of $\Gamma_f$ satisfies 
 		\begin{equation}\label{Eq-PseudHyp}
 		\inf_{\Gamma_f}|H|\leq 1. 
 		\end{equation}
 		In particular, if $\Gamma_f$ has constant mean curvature, then $ 
 		|H|\leq 
 		1$.
 		
 		\item If $\Gamma_f$ is contained in a slab, then 
 		\begin{equation*}
 			\inf_{\Gamma_f} |H| \leq |\tanh (\inf f)| <1.
 		\end{equation*}
 		Moreover, if $\Gamma_f$ has constant mean 
 		curvature, then $\Gamma_f$ is a slice.
 	\end{enumerate}
 	
 \end{cor}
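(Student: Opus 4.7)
The plan is to derive both items of the corollary by specializing the general estimates of the previous sections to the warping function $\psi(t)=\cosh t$, using the relevant entries of Table \ref{Table-PHSpace}. The essential observations are that $\psi(t)=\cosh t\geq 1$ (so the hypothesis $\inf(\psi\circ f)>0$ is automatic), that $\int_{\mathbb{R}}\operatorname{sech}(t)\,dt=\pi<\infty$ (so $\psi$ has finite reciprocal area), that the mean curvature of the slice $M\times\{t\}$ is $\mathcal{H}(t)=\tanh t$, and that $\tanh$ is strictly increasing with $|\tanh t|<1$ for every $t\in\mathbb{R}$.

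For item (i), I would apply Theorem \ref{Theo-MeanCurvOmoriYau-sigma} directly: the two hypotheses $\inf(\psi\circ f)>0$ and \nomepsi\ for $\psi$ hold for the reasons above, so we obtain
\[
\inf_{\Gamma_f}|H|\leq \sup_{f(M)}|\mathcal{H}|=\sup_{f(M)}|\tanh|\leq 1.
\]
The particular case for constant mean curvature follows because then $\inf_{\Gamma_f}|H|=|H|$.

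For item (ii), since $\Gamma_f$ is contained in a slab, $f$ is bounded, so $\inf f$ and $\sup f$ are finite real numbers. Theorem \ref{Theo-MeanCurvOmoriYau-f} then yields
\[
|\mathcal{H}(\inf f)|,\;|\mathcal{H}(\sup f)|\in\Bl\inf_{\Gamma_f}|H|,\,\sup_{\Gamma_f}|H|\Br,
\]
and in particular
\[
\inf_{\Gamma_f}|H|\leq |\mathcal{H}(\inf f)|=|\tanh(\inf f)|<1,
\]
where the strict inequality uses that $\inf f$ is finite. For the constant mean curvature claim, since $\tanh$ is injective, no two slices of $M_{\cosh t}\times\mathbb{R}$ share the same mean curvature; Corollary \ref{Corol-MeanCurvature-Slice} then forces $\Gamma_f$ to be a slice. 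Alternatively, inspecting the proof of Theorem \ref{Theo-MeanCurvOmoriYau-f} shows that $\mathcal{H}(\inf f)$ and $\mathcal{H}(\sup f)$ are both limits of values of $H$, so constancy of $H$ gives $\tanh(\inf f)=\tanh(\sup f)$ and hence $\inf f=\sup f$.

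There is no real obstacle here beyond bookkeeping: the work is entirely in verifying the hypotheses of the two previously proved theorems and reading off the slice data from Table \ref{Table-PHSpace}. The only mild subtlety is in the constant mean curvature conclusion of (ii), where one must remember that injectivity of $\mathcal{H}=\tanh$ (rather than merely $|\mathcal{H}|$) is the feature that distinguishes slices in $M_{\cosh t}\times\mathbb{R}$ and lets Corollary \ref{Corol-MeanCurvature-Slice} close the argument.
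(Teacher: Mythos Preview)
Your proposal is correct and matches the paper's approach: the paper states that Corollaries \ref{cor4-1} and \ref{cor4} follow from Table \ref{Table-PHSpace} together with Theorems \ref{Theo-MeanCurvOmoriYau-f} and \ref{Theo-MeanCurvOmoriYau-sigma}, which is exactly what you do (invoking Corollary \ref{Corol-MeanCurvature-Slice} for the CMC slice conclusion is the intended route as well, since it is how the paper handles the analogous claims for the other models).
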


	We now focus our discussion in the special warped product spaces 
	$\mathbb{H}^m_{\cosh 
	t}\times \R$ and $\R^m_{e^t}\times \R$. It is well known that these 
	spaces are isometric to the 
	hyperbolic space $\mathbb{H}^{m+1}$ (see \cite[Example 4.3, p. 
	725]{Mon}).  The slices in  $\mathbb{R}^m_{e^t}\times \mathbb{R}$ are 
	horospheres and then its  
	mean curvature is equal to $1$ (see \cite[p. 
	512]{AD}). In the case of $\mathbb{H}^m_{\cosh t}\times 
	\mathbb{R}$, the slices are equidistant hypersurfaces
	(hyperspheres) and so its mean curvature is $\tanh t$. 
	
	The Corollary \ref{cor4} applied 
	to the model
	$\mathbb{H}^m_{\cosh t} \times \R$ shows that the slices are the only 
	graphs with constant mean curvature contained in a slab. 
	From  \cite[Section 2.3]{AD1}, \cite[Theorem 
	$3.4$]{KF} and \cite[Theorem $A$]{DCL}, 
	we know that the only graphs in 
	$\mathbb{R}^m_{e^t}\times\mathbb{R}\equiv\mathbb{H}^{m+1}$ 
	with constant mean curvature are the slices. 
	In particular, there are 
	no entire minimal graphs in such model.

%	An important problem in warped product spaces is to classify the graphs 
%	with constant mean curvature. A central  

		%The next corollary still goes in this direction.

%\begin{cor}\label{cor5}%\FCchange{Corolario do Teorema $1.3$}
%	If the graph $\Gamma_f$ is contained in a slab, 
%	%	is bounded  from below and above by a 
%	%	hypersphere 
%	then  the mean curvature of $\Gamma_f \subset\mathbb{H}^m_{\cosh t}\times 
%	\mathbb{R} \equiv \mathbb{H}^{m+1}$ verifies  
%	\begin{equation*}
%	\inf_{\Gamma_f} |H| \leq |\tanh(\inf f)| < 1.
%	\end{equation*}
%	In particular, 
%	if $H$ is 
%	constant and $\displaystyle\inf_{\mathbb{H}^m}f = 0$, then $\Gamma_f$ is 
%	the hyperplane $\mathbb{H}^m \times \{0\}$.
%\end{cor} 

As consequence of Theorem \ref{Teo-ScalCurv} one has the following result 
concerning the scalar curvature of graphs in the model 
$\mathbb{H}^m_{\cosh t}\times \R$. We observe that in this model 
\Eqref{eq08} holds 
with $\alpha=\beta =0$. 

\begin{cor}\label{cor3}
	The scalar curvature $R$ of $\Gamma_f\subset\mathbb{H}^m_{\cosh t}\times \R 
	\equiv\mathbb{H}^{m+1}$ satisfies 
	\begin{equation}\label{eq007}
	\inf_{\Gamma_f} |R| \leq 5. 
	\end{equation}
	Furthermore, when $H$ does not change sign, we obtain
	\begin{equation}\label{Eq-InfScalarHyperbolic}
	\inf_{\Gamma_f} |R| \leq 3. 
	\end{equation}
	
%	we obtain that
%	\begin{equation*}
%	\inf_{\Gamma_g} |R| \leq 4\sup_{\Gamma_g} |H|+9. 
%	\end{equation*}
\end{cor}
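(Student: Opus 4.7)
The plan is to apply Theorem \ref{Teo-ScalCurv-sigma} directly to the model $M = \mathbb{H}^m$ with warped function $\psi(t) = \cosh t$, using the explicit data collected in Table \ref{Table-PHSpace}, and then to upgrade the constant from $4$ to $2$ via the sign-restricted refinement sketched in Remark \ref{Rem-3} (the unbounded-$f$ analogue of Remark \ref{Rem-2}).

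First, I would verify the hypotheses of Theorem \ref{Teo-ScalCurv-sigma}. The base $\mathbb{H}^m$ is complete with sectional curvature identically $-1$, hence bounded below. The warped function $\psi(t) = \cosh t$ has \nomepsi, since
\begin{equation*}
\int_{\mathbb{R}} \frac{dt}{\cosh t} = \pi < \infty,
\end{equation*}
and $\inf_M(\psi\circ f) \geq 1 > 0$ automatically. For inequality \eqref{eq08}, plug in $K_M = -1$ and $\mathcal{H}(t) = \tanh t$, so that $\mathcal{H}'(t) = \operatorname{sech}^2 t$. Then for every $p \in M$,
\begin{equation*}
\mathcal{K}(\phi(p)) + \mathcal{H}'(f(p)) \;=\; -\frac{1}{\cosh^2 f(p)} + \frac{1}{\cosh^2 f(p)} \;=\; 0,
\end{equation*}
so \eqref{eq08} holds with $\alpha = \beta = 0$.

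With these hypotheses verified, I would read off from Table \ref{Table-PHSpace} the bounds $\sup_{f(M)}|\mathcal{H}|^2 = \sup \tanh^2 f \leq 1$ and $\sup_{f(M)}|\mathcal{K}^\perp| = 1$. Theorem \ref{Teo-ScalCurv-sigma} then gives
\begin{equation*}
\inf_{\Gamma_f}|R| \;\leq\; 0 - 0 + 4\cdot 1 + 1 \;=\; 5,
\end{equation*}
which is \eqref{eq007}. For the strengthened estimate \eqref{Eq-InfScalarHyperbolic}, I would invoke the refinement from Remark \ref{Rem-3}: when the mean curvature $H$ does not change sign, the coefficient in front of $\sup|\mathcal{H}|^2$ drops from $4$ to $2$ (the case $H_2 < 0$ everywhere in the proof of Theorem \ref{Teo-ScalCurv} is then avoided or weakened). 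The same numerical inputs yield $\inf_{\Gamma_f}|R| \leq 0 + 2 + 1 = 3$.

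No real obstacle arises; the work is essentially bookkeeping of the constants, since the analytic machinery (the Omori--Yau principle, the conformal change $\tau$ to $M \times \mathbb{R}$, and Lemma \ref{ecpprop3}) has been fully absorbed into the statements invoked. The one point requiring a little care is making sure that the sign-restricted refinement of Remark \ref{Rem-3}, stated there primarily for $\psi$ without \nomepsi, is equally valid when $\psi$ does have \nomepsi---which is immediate since the argument only improves in that case.
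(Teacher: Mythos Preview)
Your proposal is correct and follows essentially the same route as the paper: observe that in this model $\alpha=\beta=0$, $|\mathcal{H}|\leq 1$, $|\mathcal{K}^\perp|=1$, and apply the scalar curvature theorem together with the sign-restricted refinement to obtain the constants $5$ and $3$. The only cosmetic difference is that the paper cites Theorem~\ref{Teo-ScalCurv} (the slab version) in the sentence introducing the corollary, whereas you invoke Theorem~\ref{Teo-ScalCurv-sigma}; your choice is in fact the more appropriate one, since the corollary is stated for arbitrary graphs and $\psi=\cosh t$ has reciprocal with finite area.
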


%The estimate \eqref{Eq-InfScalarHyperbolic} still holds for an entire 
%graph in $\mathbb{R}^m_
%{e^t}\times \mathbb{R}\equiv \mathbb{H}^{m+1}$ with bounded 
%mean curvature not changing sign, under the additional 
%assumption that $f$ is bounded from below.  

From Theorem 
\ref{Teo-EntGrapSecForm} we obtain the following estimates for the norm of 
the 
shape operator of a graph.
%in 
%the models $\mathbb{R}^m_{e^t}\times \mathbb{R}$ 
%and $\mathbb{H}^m_{\cosh t}
%\times \mathbb{R}$ of the hyperbolic space:  
%%from item $(a)$ of the Theorem \ref{teo7} or Theorem \ref{teo3}, 
%%we have the following estimates for the shape operator: 

\begin{cor}\label{cor-normAcosh}
	Suppose that $m\geq 3$. If 
	$\Ric_{\Gamma_f} < -1$, then the norm of the 
	shape operator of $\Gamma_f\subset \mathbb{H}^m_{\cosh t}\times \mathbb{R} \equiv 
	\mathbb{H}^{m+1}$ satisfies 
	\begin{equation}\label{Eq-EntireFormFund}
		\inf_{\Gamma_f} |A| \leq 3(m-2).
	\end{equation}

\end{cor}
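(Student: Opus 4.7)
The plan is to verify directly that the hypotheses of Theorem \ref{Teo-EntGrapSecForm}(i) are satisfied for the warped product $\mathbb{H}^m_{\cosh t}\times\mathbb{R}$ and then read off the conclusion. First I would record the four data from Table \ref{Table-PHSpace}: $\H(t)=\tanh t$, $\cK(p,t)=-1/\cosh^{2}t$ (using that $K_{\mathbb{H}^m}\equiv-1$), $\cK^{\perp}(t)=-1$, and $\sigma(t)=\pi-2\arctan(e^{t})$. Three of the running hypotheses of Theorem \ref{Teo-EntGrapSecForm} are then immediate: $\mathbb{H}^{m}$ is complete with constant sectional curvature $-1$ (hence bounded below), $\psi\circ f=\cosh f\geq 1$ so $\inf(\psi\circ f)>0$, and $\sigma$ takes values in $(0,\pi)$, so in particular it is bounded.

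Next I would check the curvature hypothesis \eqref{eq021}. Differentiating $\H(t)=\tanh t$ gives $\H'(t)=\mathrm{sech}^{2}t=1/\cosh^{2}t$, which exactly cancels $\cK(p,t)=-1/\cosh^{2}t$, so
\begin{equation*}
\cK(\phi(p))+\H'(f(p))=0\qquad\text{for every }p\in M,
\end{equation*}
and therefore $\inf_{M}(\cK\circ\phi+\H'\circ f)=0$. Since $\cK^{\perp}\equiv -1$ on the whole interval, one also has $\inf_{f(M)}\cK^{\perp}=-1$. Hence the chain in \eqref{eq021} becomes
\begin{equation*}
\Ric_{\Gamma_f}-\inf_{f(M)}\cK^{\perp}=\Ric_{\Gamma_f}+1<0=\inf_{M}(\cK\circ\phi+\H'\circ f),
\end{equation*}
which is precisely the standing hypothesis $\Ric_{\Gamma_f}<-1$.

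With all hypotheses of Theorem \ref{Teo-EntGrapSecForm}(i) in force, I would apply its conclusion and estimate $\sup_{f(M)}|\H|$. Since $|\tanh t|<1$ for every $t\in\mathbb{R}$, we have $\sup_{f(M)}|\H|\leq 1$, and therefore
\begin{equation*}
\inf_{\Gamma_f}|A|\leq 3(m-2)\sup_{f(M)}|\H|\leq 3(m-2),
\end{equation*}
which is \eqref{Eq-EntireFormFund}. There is essentially no serious obstacle here: the argument is a book-keeping exercise matching the explicit curvature data of the warped model $\mathbb{H}^{m}_{\cosh t}\times\mathbb{R}$ against the abstract hypotheses of Theorem \ref{Teo-EntGrapSecForm}(i). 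The only subtle point worth emphasizing is the miraculous cancellation $\cK+\H'\equiv 0$ in this model, which is what makes the curvature hypothesis \eqref{eq021} reduce cleanly to the single condition $\Ric_{\Gamma_f}<-1$ stated in the corollary.
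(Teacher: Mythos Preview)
Your proof is correct and follows exactly the approach the paper intends: the corollary is stated immediately after the sentence ``From Theorem \ref{Teo-EntGrapSecForm} we obtain the following estimates,'' and you have carried out precisely that verification, checking that $\cK+\H'\equiv 0$, $\cK^{\perp}\equiv -1$, $\sigma$ is bounded, and $\sup|\H|\leq 1$ in the model $\mathbb{H}^m_{\cosh t}\times\mathbb{R}$.
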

%The estimate \eqref{Eq-EntireFormFund} still holds for a graph in 
%$\Gamma_f\subset\mathbb{R}^m_{e^t}\times 
%\mathbb{R}
%\equiv \mathbb{H}^{m+1}$, when $\Gamma_f$ is bounded from 
%below by a slice and satisfies $\Ric_{\Gamma_f} < -1$. 
%
%Assuming that the mean curvature of the graph does 
%not change 
%sign instead $\Ric_{\Gamma_f}<-1$, we have a sharper estimate.  
\begin{cor}\label{Cor-SecFormRef}
	If the mean  curvature $H$ of a graph $\Gamma_f\subset 
	\mathbb{H}^m_{\cosh t}\times \mathbb{R} \equiv 
	\mathbb{H}^{m+1}$ does not change sign, then the norm of its 
	shape 
	operator satisfies 
	\begin{equation}\label{Eq-SecFormRef}
		\inf_{\Gamma_f} |A| \leq m.
	\end{equation}	
\end{cor}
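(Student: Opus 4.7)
The plan is to derive this corollary as a direct specialization of Theorem~\ref{Teo-EntGrapSecForm}(ii) to the warped product structure $\mathbb{H}^m_{\cosh t}\times \mathbb{R}$, checking that each hypothesis of that theorem is satisfied in the pseudo-hyperbolic setting recorded in Table~\ref{Table-PHSpace}.

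First I would verify the standing hypotheses of Theorem~\ref{Teo-EntGrapSecForm}. The base $M = \mathbb{H}^m$ is a complete Riemannian manifold with constant sectional curvature $-1$, hence bounded from below. Since $\psi(t) = \cosh t \geq 1$, we have $\inf(\psi \circ f) \geq 1 > 0$ for any function $f$. Moreover, from Example~\ref{Ex-Sigmabounded} (equivalently, the last column of Table~\ref{Table-PHSpace}), the conformal change function satisfies $\sigma(t) = \pi - 2\arctan(e^t) \in (0,\pi)$, so $\sigma$ is bounded from both sides; in particular it is bounded from above and below. Together with the assumption that $H$ does not change sign, all hypotheses of item (ii) of Theorem~\ref{Teo-EntGrapSecForm} are in place.

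Applying that theorem gives
\begin{equation*}
\inf_{\Gamma_f} |A| \leq m \cdot \sup_{f(M)} |\mathcal{H}|.
\end{equation*}
From Table~\ref{Table-PHSpace} the mean curvature of the slices in $\mathbb{H}^m_{\cosh t}\times \mathbb{R}$ is $\mathcal{H}(t) = \tanh(t)$, so $|\mathcal{H}(t)| < 1$ for all $t \in \mathbb{R}$, and thus $\sup_{f(M)} |\mathcal{H}| \leq 1$. Substituting yields $\inf_{\Gamma_f}|A| \leq m$, which is~\eqref{Eq-SecFormRef}.

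There is essentially no obstacle here beyond correctly identifying that the constant-sign hypothesis on $H$ matches the hypothesis of Theorem~\ref{Teo-EntGrapSecForm}(ii), and that the bound $|\tanh| < 1$ uniformly controls $\sup_{f(M)}|\mathcal{H}|$ independently of $f$; this uniformity is precisely why the estimate holds without any boundedness assumption on $f$, in contrast with Theorem~\ref{Teo-EntGrapSecForm2}(ii) where the bound depends on the specific values $\mathcal{H}(\inf f)$ and $\mathcal{H}(\sup f)$.
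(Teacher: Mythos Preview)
Your proof is correct and follows exactly the approach the paper intends: the corollary is stated immediately after the sentence ``From Theorem~\ref{Teo-EntGrapSecForm} we obtain the following estimates for the norm of the shape operator of a graph,'' and your argument is precisely the specialization of Theorem~\ref{Teo-EntGrapSecForm}(ii) to $\psi=\cosh t$, using $\sup|\tanh|\leq 1$ from Table~\ref{Table-PHSpace}.
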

\begin{remark}
With the same hypothesis, the estimates \eqref{Eq-InfScalarHyperbolic}, 
\eqref{Eq-EntireFormFund}, 
\eqref{Eq-SecFormRef} still hold for entire graphs in 
$\mathbb{R}^m_{e^t}\times 
\mathbb{R}
\equiv \mathbb{H}^{m+1}$, under the additional 
assumption that the graph is bounded from below by a slice.
\end{remark}

Another warped product model for the Hyperbolic space is construct in the 
following way.
Let $\S^m$ be the $m$-dimensional sphere with radius one, \ie 
$\S^m:=\setdef{x\in \R^{m+1}}{|x|=1}$.
The hyperbolic space $\mathbb{H}^{m+1}$ can also be seen as the model 
$(\S^m)_{\sinh t}\times 
(0,\infty)$ with metric
\begin{equation*}
	\langle\cdot,\cdot\rangle_{\mathbb{H}^{m+1}} = \sinh^2 t 
	\langle\cdot,\cdot\rangle_{\S^m} + dt^2.
\end{equation*}
 The slices have mean curvature and sectional 
curvature given by $\H(t)=\coth t$ and $\cK(p,t)= \csch^2(t)$, respectively. 
The normal sectional curvature of the slice is constant, $\cK^\perp(p,t)=-1$ 
and $\sigma(t) = \displaystyle\log \coth \frac{t}{2}$. Let $f:\S^m \to 
(0,\infty)$ 
be a smooth function. By \Eqref{Eq-EqualityMeanCurvGraphSlice} the supremum 
of the mean curvature of $\Gamma_f$ is always greater 
than $1$. Moreover, Corollary \ref{Corol-MeanCurvature-Slice} implies the 
following result.
\begin{cor}\label{Cor-Sphere-Hype}
	If $\Gamma_f\subset 
	(\S^m)_{\sinh t}\times 
	(0,\infty) \equiv \mathbb{H}^{m+1}$ has constant mean curvature, then 
	$\Gamma_f$ is a slice. In particular, there are no minimal graph in the 
	model 
	$(\S^m)_{\sinh t}\times (0,\infty) \equiv \mathbb{H}^{m+1}$.
\end{cor}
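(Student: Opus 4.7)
The plan is to deduce this directly from Corollary~\ref{Corol-MeanCurvature-Slice} applied to the warped product $(\S^m)_{\sinh t}\times (0,\infty)$. I need to verify the three hypotheses of that corollary in the present setting.

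First, the base $M=\S^m$ is compact, hence complete, and has constant sectional curvature $1$, so in particular its sectional curvature is bounded below. Second, from the data recorded just before the statement, the mean curvature of the slice at height $t$ is $\H(t)=\coth t$, which is strictly decreasing (and hence injective) on $(0,\infty)$; consequently no two slices share the same mean curvature.

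The remaining hypothesis, that $\Gamma_f$ be contained in a slab, is the only point that requires a small observation, but it is immediate from compactness: since $f\colon\S^m\to(0,\infty)$ is smooth and $\S^m$ is compact, $f$ attains its minimum $a>0$ and its maximum $b<\infty$, so $f(\S^m)\subset[a,b]$ and $\Gamma_f$ lies in the slab $\S^m\times[a,b]$. Thus all the hypotheses of Corollary~\ref{Corol-MeanCurvature-Slice} are met, and if $\Gamma_f$ has constant mean curvature it must be a slice.

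For the second assertion, note that every slice in this model has mean curvature $\coth t$, and since $\coth t>1>0$ for every $t\in(0,\infty)$, no slice is minimal. Combined with the first part, this rules out any minimal graph in $(\S^m)_{\sinh t}\times(0,\infty)$. There is no real obstacle here; the point of the argument is that the cosmological/geometric features of this particular model, namely compactness of the fibre $\S^m$ and strict monotonicity of $\coth$, make Corollary~\ref{Corol-MeanCurvature-Slice} directly applicable.
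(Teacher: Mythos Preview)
Your proof is correct and follows exactly the approach indicated in the paper, which simply states that Corollary~\ref{Corol-MeanCurvature-Slice} implies this result. You have spelled out the verification of the hypotheses (completeness and bounded sectional curvature of $\S^m$, injectivity of $\H(t)=\coth t$, and containment in a slab via compactness) that the paper leaves implicit.
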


We observe that in model $\mathbb{H}^{m+1}$ described above, \Eqref{eq08} 
holds with $\alpha=\beta=0$. Then 
by Theorem 
\ref{Teo-ScalCurv}, one has the following estimate for the scalar 
curvature of the graph of a function $f:S^m\to (0,\infty)$ 
\begin{equation*}
\inf_{\Gamma_f} |R|\leq 4 \coth^2(\sup f) +1. 
\end{equation*}
When $m\geq 3$, Theorem 
\ref{Teo-EntGrapSecForm2} implies
\begin{equation*}
	\inf_{\Gamma_f} |A|\leq 3(m-2)\coth(\sup f).
\end{equation*}

\subsection{Spheres}

Consider the $(m+1)$-dimensional sphere $\S^{m+1}\subset\R^{m+2}$ with 
the induced metric
\begin{equation*}
\langle\cdot,\cdot\rangle_{\S^{m+1}} = \sin^2\theta_{m+1} 
\langle\cdot,\cdot\rangle_{\S^{m}}+d\theta_{m+1}^2 ,
\end{equation*}
where $\theta_{m+1}\in (0,\pi)$. Therefore, we can view 
$\S^{m+1}-\{p_N,p_S\}$ as 
the 
warped product $ 
(\S^{m})_{\sin\theta_{m+1}}  
\times (0,\pi)$, where $p_N = (0,\ldots,1)$ and $p_S = (0,\ldots,-1)$. The 
curvatures of the slices that we need to apply our results are given by
\begin{equation*}
	\begin{aligned}
		\H(\theta_{m+1}) = \cot \theta_{m+1}, \;\; \cK(p,\theta_{m+1}) = 
		\csc^2(\theta_{m+1}), \;\; \cK^{\perp}(p,\theta_{m+1}) 
		= 1.
	\end{aligned}
\end{equation*}
Note that $\sigma(\theta_{m+1}) = \log\cot 
\Bl\frac{\theta_{m+1}}{2}\Br$ is unbounded. 

Using the Corollary \ref{Corol-MeanCurvature-Slice} we obtain.

\begin{cor}
	Any graph $\Gamma_f\subset (S^{m})_{\sin\theta_{m+1} \times (0,\pi)}$ 
	with constant mean 
	curvature is a slice. 
\end{cor}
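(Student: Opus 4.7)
The plan is to invoke Corollary \ref{Corol-MeanCurvature-Slice} directly, after checking that its three hypotheses are fulfilled in this particular warped product setting. First, I would observe that the base manifold $\S^m$ is a compact Riemannian manifold with constant sectional curvature equal to $1$; in particular it is complete and its sectional curvature is bounded from below, so the ambient regularity hypothesis of the corollary holds.

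Next, I would verify that no two slices share the same mean curvature. From the curvature data for this warped product listed just above the statement, the mean curvature of the slice $\S^m\times\{\theta_{m+1}\}$ is $\H(\theta_{m+1})=\cot\theta_{m+1}$. Since $\cot:(0,\pi)\to\R$ is a strictly decreasing diffeomorphism, two distinct values of $\theta_{m+1}\in(0,\pi)$ always produce two distinct mean curvatures; hence the non-degeneracy hypothesis of Corollary \ref{Corol-MeanCurvature-Slice} is satisfied.

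The only remaining point is to show that $\Gamma_f$ is automatically contained in a slab. Because $\S^m$ is compact and $f:\S^m\to(0,\pi)$ is smooth, its image $f(\S^m)$ is a compact subset of the open interval $(0,\pi)$, so there exist $0<a\le b<\pi$ with $f(\S^m)\subset[a,b]$. Thus $\Gamma_f$ lies inside the slab $\S^m\times[a,b]$, exactly the hypothesis of Corollary \ref{Corol-MeanCurvature-Slice}.

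With these three items in hand, the conclusion of Corollary \ref{Corol-MeanCurvature-Slice} applies verbatim: the only graphs of constant mean curvature in $(\S^m)_{\sin\theta_{m+1}}\times(0,\pi)$ are the slices. There is no genuine obstacle here; the only mild subtlety is the observation that compactness of $\S^m$ upgrades ``graph'' to ``graph contained in a slab'' without any extra assumption on $f$, which is what bridges the hypothesis of the previous corollary and the present unbounded warping interval.
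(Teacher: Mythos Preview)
Your proof is correct and follows exactly the route the paper takes: it simply invokes Corollary~\ref{Corol-MeanCurvature-Slice}, and you have correctly filled in the verification of its hypotheses (completeness and bounded sectional curvature of $\S^m$, injectivity of $\H(\theta_{m+1})=\cot\theta_{m+1}$, and the slab condition coming from compactness of $\S^m$).
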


It is not difficult to see that the numbers $\alpha$ 
and $\beta$ of 
\Eqref{eq08} are null, then applying Theorem \ref{Teo-ScalCurv} and Theorem 
\ref{Teo-EntGrapSecForm2} we obtain 

\begin{cor}
	The scalar curvature of a graph $\Gamma_f \subset 
	(S^m)_{\sin\theta_{m+1}}\times (0,\pi) $ satisfies
	\begin{equation*}
			\inf_{\Gamma_f} |R|\leq 4 \min\left\{\cot^2(\inf 
		f),\cot^2(\sup f)\right\}+1.
	\end{equation*}
	Moreover, if $m\geq 3$ the norm of the shape operator of $\Gamma_f$ 
	satisfies the 
	following estimative
	\begin{equation*}
		\inf_{\Gamma_f} |A|\leq 3(m-2) \min\big\{|\cot(\inf f)|,|\cot{(\sup 
		f)}|\big\}.
	\end{equation*}
\end{cor}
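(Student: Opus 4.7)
The plan is to deduce both estimates as direct applications of Theorem \ref{Teo-ScalCurv} and item (i) of Theorem \ref{Teo-EntGrapSecForm2} to the warped product $(\S^m)_{\sin\theta_{m+1}} \times (0,\pi)$, after checking the required hypotheses. The key preparatory fact, already signalled in the text immediately preceding the statement, is that the constants $\alpha$ and $\beta$ in inequality \eqref{eq08} can both be taken equal to $0$ for this ambient space.

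First I would verify the standing hypotheses. The base $\S^m$ is compact, hence complete, and has constant sectional curvature $1$, so in particular it is bounded below. Because $\S^m$ is compact and $f$ is continuous with image in $(0,\pi)$, the set $f(\S^m)$ is compact and therefore contained in a closed subinterval $[a,b]\subset (0,\pi)$; consequently $\Gamma_f$ is automatically contained in a slab, as required by both theorems.

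Next I would carry out the short curvature computations summarised just before the statement. With $\psi(t)=\sin t$ one has $\H(t)=\cot t$ and $\H'(t)=-\csc^2 t$, while $\cK(p,t)=K_{\S^m}/\sin^2 t=\csc^2 t$ and $\cK^\perp(t)=-\psi''(t)/\psi(t)=1$. In particular
\begin{equation*}
(\cK\circ\phi)(x) + (\H'\circ f)(x) = \csc^2 f(x) - \csc^2 f(x) = 0
\end{equation*}
for every $x\in \S^m$, so \eqref{eq08} holds with $\alpha=\beta=0$. Inserting these values together with $|\H(t)|=|\cot t|$ and $|\cK^\perp(t)|=1$ into the conclusion of Theorem \ref{Teo-ScalCurv} then gives
\begin{equation*}
\inf_{\Gamma_f}|R| \leq \min\bigl\{4\cot^2(\inf f)+1,\,4\cot^2(\sup f)+1\bigr\} = 4\min\{\cot^2(\inf f),\cot^2(\sup f)\} + 1,
\end{equation*}
which is the first claim.

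For the shape-operator estimate I would invoke item (i) of Theorem \ref{Teo-EntGrapSecForm2}. Given the computations above, the hypothesis \eqref{eq021} specialises to $\Ric_{\Gamma_f}<1$; I would treat this as an (implicit) assumption on the graph, in the spirit of the explicit Ricci bound that appears in Corollary \ref{cor-normAcosh}. Under that condition the theorem yields $\inf_{\Gamma_f}|A|\leq 3(m-2)\min\{|\H(\inf f)|,|\H(\sup f)|\}$, which is exactly the second claim once $\H(t)=\cot t$ is substituted. The main delicate point is precisely this Ricci hypothesis: unlike in the hyperbolic model where $\cK^\perp=-1$ makes \eqref{eq021} comparatively mild, here $\cK^\perp=+1$ forces the bound $\Ric_{\Gamma_f}<1$ on the graph itself, a constraint that should arguably be stated explicitly in the corollary.
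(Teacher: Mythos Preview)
Your proposal is correct and follows exactly the paper's approach: the paper simply notes that $\alpha=\beta=0$ in this model and then invokes Theorem \ref{Teo-ScalCurv} and Theorem \ref{Teo-EntGrapSecForm2} without further argument. Your observation about the implicit Ricci hypothesis $\Ric_{\Gamma_f}<1$ needed for item (i) of Theorem \ref{Teo-EntGrapSecForm2} is well taken; the paper's statement of the corollary omits it too, so you have identified a small gap in the paper's formulation rather than in your own reasoning.
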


\subsection{Euclidean Spaces}

The expression of the Euclidean metric in spherical coordinates 
\begin{equation*}
	\langle\cdot,\cdot\rangle_{\R^{m+1}} = t^2 
	\langle\cdot,\cdot\rangle_{S^{m}} + dt^2.
\end{equation*}
show that $\R^{m+1}-\{0\}$ can be identified with the warped product 
$(\S^{m})_{t} \times (0,\infty)$.

The mean curvature, the 
sectional curvature and the normal sectional curvature at 
$(p,t)\in 
\S^{m}\times 
\{t\}$ of the slices are given by, respectively,
\begin{equation*}
\H(t) = \frac{1}{t}, \qquad \cK(p,t) = \frac{1}{t^2},\qquad \cK^\perp(p,t) = 
0.
\end{equation*}
We observe that $\sigma(t) = -\log t$ is unbounded in this model.

It is clear from Corollary 
\ref{Corol-MeanCurvature-Slice} the following result.
\begin{cor}
The only graphs with constant mean 
curvature in $(S^m)_t\times (0,\infty)$ are the slices. Moreover, there are 
no minimal graphs in this 
model.	
\end{cor}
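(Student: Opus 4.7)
The plan is to invoke Corollary \ref{Corol-MeanCurvature-Slice} directly. That corollary has three hypotheses: the base manifold is complete with sectional curvature bounded below; no two slices share the same mean curvature; and the graph is contained in a slab. In the present setting the base is $\S^m$, which is compact, hence complete, with constant sectional curvature $1$, so the first hypothesis is immediate. The slice mean curvature is $\mathcal{H}(t) = 1/t$, a strictly decreasing function on $(0,\infty)$, which verifies the second hypothesis. The apparent obstacle is the slab condition, since the corollary explicitly demands it; a priori an arbitrary $f : \S^m \to (0,\infty)$ need not be bounded, and one might worry that the earlier machinery fails because $\psi(t)=t$ does not have \nomepsi\ (the integral $\int_0^\infty dt/t$ diverges) and $\sigma(t) = -\log t$ is not even partially bounded.

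However, compactness of $\S^m$ resolves this at once. Any smooth function $f : \S^m \to (0,\infty)$ is automatically bounded, with $0 < \inf f \le \sup f < \infty$, so the graph $\Gamma_f$ lies in the slab $\S^m \times [\inf f, \sup f]$. Thus Corollary \ref{Corol-MeanCurvature-Slice} applies and forces any graph of constant mean curvature to be a slice.

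For the second assertion, observe that every slice $\S^m \times \{t\}$ has mean curvature $\mathcal{H}(t) = 1/t > 0$, so no slice is minimal. Combined with the first part, this means that a minimal graph would have to be a slice, yet no slice is minimal; therefore no minimal graphs exist in the model $(\S^m)_t \times (0,\infty)$.

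The main (very mild) subtlety is recognizing that compactness of the base lets us bypass all the intricate boundedness hypotheses on $\psi$, $\sigma$, or $f$ that were needed in Section \ref{Sec-Est-Curv--Not-Cont-Slab}; once that observation is made, the proof reduces to a one-line application of the already-established slab corollary together with the strict monotonicity of $t\mapsto 1/t$.
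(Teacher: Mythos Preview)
Your proof is correct and follows exactly the paper's approach: the paper simply states that the result ``is clear from Corollary \ref{Corol-MeanCurvature-Slice},'' and you have accurately supplied the verification of its hypotheses (compactness of $\S^m$ forces the slab condition and gives completeness with bounded curvature, while $\mathcal{H}(t)=1/t$ is injective). Your argument for the nonexistence of minimal graphs is also the intended one.
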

Note that the mean curvature of a graph in $(S^m)_t\times (0,\infty)$ cannot 
be negative 
since we always have a sequence in $\S^m$ along which the mean curvature of 
the graph converges to a positive number (see 
\Eqref{Eq-EqualityMeanCurvGraphSlice}). Since in this model \Eqref{eq08} 
holds with $\alpha =\beta =0$ 
we can apply Theorem \ref{Teo-ScalCurv} and item ii) from Theorem 
\ref{Teo-EntGrapSecForm2} to obtain the next corollary. 

\begin{cor}
	The scalar 
	curvature of $\Gamma_f \subset (S^m)_t\times(0,\infty)$ satisfies
	\begin{equation*}
		\inf_{\Gamma_f} |R| \leq \Bl\frac{2}{\sup f}\Br^2 .
	\end{equation*}
	Moreover, if the mean curvature of the graph of $f$ does not change sign, 
	then 
	\begin{equation*}
	H\geq 0 \qquad \text{and}\qquad \inf_{\Gamma_f}|A| \leq \frac{m}{\sup f}.
	\end{equation*}
\end{cor}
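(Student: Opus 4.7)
The plan is to apply Theorem \ref{Teo-ScalCurv} and Theorem \ref{Teo-EntGrapSecForm2} directly, using the curvature data just tabulated for the Euclidean model $(\S^m)_t\times(0,\infty)$. The only non-routine step is showing that if $H$ does not change sign, then the sign must be non-negative.

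First I would check that the hypotheses of both theorems are in force. Since $\S^m$ is compact with constant sectional curvature $+1$, it is complete with sectional curvature bounded below, and every $f:\S^m\to(0,\infty)$ attains its supremum and infimum, both strictly positive. Hence $\Gamma_f$ is automatically contained in the slab $\S^m\times[\inf f,\sup f]$. From the explicit formulas above the corollary, $\H(t)=1/t$, $\cK(p,t)=1/t^2$ and $\cK^{\perp}(p,t)=0$, so
\[
\cK(p,t)+\H'(t)=\frac{1}{t^2}-\frac{1}{t^2}=0,
\]
which means inequality \eqref{eq08} holds with $\alpha=\beta=0$.

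For the scalar curvature estimate, I would feed these data into Theorem \ref{Teo-ScalCurv}. Since $\cK^\perp\equiv 0$ and $\H^2(t)=1/t^2$ is decreasing in $t$, the minimum in \eqref{Eq-ScalCurvfbounded} is attained at $\sup f$, yielding
\[
\inf_{\Gamma_f}|R|\;\leq\;\min\!\left\{\frac{4}{(\inf f)^2},\,\frac{4}{(\sup f)^2}\right\}\;=\;\Bl\frac{2}{\sup f}\Br^2.
\]

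For the second claim I first need $H\ge 0$. Proposition \ref{Prop-PrinCurvRefin}(i) produces a sequence $(x_n)\subset\S^m$ with $f(x_n)\to\inf f$ and $\|\nabla\widehat f(x_n)\|\to 0$, and equation \eqref{Eq-EqualityMeanCurvGraphSlice} gives
\[
\lim_{n\to\infty}H(\phi(x_n))=\H(\inf f)=\frac{1}{\inf f}>0.
\]
Thus $H$ takes strictly positive values arbitrarily close to the slice $\S^m\times\{\inf f\}$, so a non-sign-changing $H$ must be non-negative on all of $\Gamma_f$. This is the only place where anything subtle happens; everything else reduces to invoking the general estimates. With $H\ge 0$ established, Theorem \ref{Teo-EntGrapSecForm2}(ii) gives
\[
\inf_{\Gamma_f}|A|\;\leq\;m\cdot\min\!\left\{|\H(\inf f)|,|\H(\sup f)|\right\}\;=\;m\cdot\min\!\left\{\frac{1}{\inf f},\frac{1}{\sup f}\right\}\;=\;\frac{m}{\sup f},
\]
completing the proof.
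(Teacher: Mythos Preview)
Your argument is correct and matches the paper's own reasoning: the paper also observes that \eqref{eq08} holds with $\alpha=\beta=0$ in this model, invokes Theorem \ref{Teo-ScalCurv} and Theorem \ref{Teo-EntGrapSecForm2}(ii), and uses \eqref{Eq-EqualityMeanCurvGraphSlice} to force $H\ge 0$ when $H$ does not change sign. Your write-up simply makes the hypothesis checks and the evaluation of the minimum explicit.
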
 

%%%%%%%%%%%%%%%%%%%%%%%%%%%%%%%%
\section*{Acknowledgments}
%%%%%%%%%%%%%%%%%%%%%%%%%%%%%%%%
The authors are grateful to Francisco Fontenele from UFF(Brazil) for the 
encouragement, helpful conversations in developing and many valuable 
suggestions which improved the quality of this paper.

\bibliography{BarCosHar}
\bibliographystyle{amsalpha}

\end{document}